\documentclass{article}
\usepackage{epsfig}
\usepackage{amsmath}
\usepackage{amssymb}
\usepackage{amsfonts}
\usepackage{float}
\usepackage{graphics}

\evensidemargin=0in
\oddsidemargin=0in
\topmargin=0in
\topskip=0pt
\baselineskip=12pt
\parskip=8pt
\parindent=1em

\newcommand{\cp}{{\,\Box\,}}

\newtheorem{thm}{Theorem}
\newtheorem{prop}{Proposition}

\newtheorem{conj}{Conjecture}

\newtheorem{prob}{Problem}

\def\node(#1){\put(#1){\circle*{2}}}
\def\nodeo(#1){\put(#1){\circle{2}}}
\def\vert(#1){\put(#1){\circle*{1.5}}}
\def\verto(#1){\put(#1){\circle{1.5}}}
\def\lab(#1)#2{\put(#1){\makebox(0,0)[c]{#2}}}
\setlength{\unitlength}{1mm}

\setlength{\unitlength}{1mm}
\newcommand{\cF}{\mathcal{F}}

\newcommand{\qed}{\hfill$\Box$}

\usepackage{graphicx}
\usepackage{tikz}
\title{On maximizing private neighbors in graphs}

\author{
Stephen T. Hedetniemi $^{a,}$\thanks{Email: \texttt{hedet@clemson.edu}}
\and
Douglas F. Rall $^{b,}$\thanks{Email: \texttt{doug.rall@furman.edu}}
}

\begin{document}

\date{\today}

\maketitle

\begin{center}
	$^a$  Emeritus Professor of Computer Science, Clemson University, Clemson, SC, USA\\
	\medskip

	$^b$ Emeritus Professor of Mathematics, Furman University, Greenville, SC, USA \\
   \medskip

\end{center}

\begin{abstract}
Given a set $U \subset V$ of vertices in a graph $G = (V, E)$, a {\it private neighbor with respect to the set $U$} is any vertex $w \in V$ having precisely one neighbor, say $v$, in $U$.  If $w \in V - U$, then $w$ is called an {\it external private neighbor} of $v$ with respect to $U$.  If $w \in U$ then $w$ is called an {\it internal private neighbor} of $v$ with respect to $U$.  We also add one special case:  if $w \in U$ and $N(w) \cap U = \emptyset$, then we say that $w$ is a {\it self private neighbor} with respect to $U$.  By definition, a self private neighbor with respect to $U$ is an isolated vertex in the subgraph of $G$ induced by $U$.  In this paper we consider the general problems of trying to find sets of vertices which maximize the number of private neighbors of specific types in a graph.  In the process of doing this we define several new maximization parameters of graphs which generalize some known and well-studied parameters of graphs relating to vertex and edge independence, domination and irredundance in graphs.

\end{abstract}

\noindent
{\bf Keywords:} private neighbor, irredundance, domination \\

\noindent
{\bf AMS Subj.\ Class.\ (2020)}: 05C69.

\section{Introduction}

Let $G=(V,E)$ be a finite, simple graph of order $n(G)$ and size $m(G)$.  In general we follow the notation and definitions of~\cite{hhh-2023}.  We include here those used most often in this study.  For $x\in V(G)$  the \emph{open neighborhood} of $x$ is the set $N_G(x)$ defined by $N_G(x)=\{w \in V\,:\,xw \in E\}$.  Any  $w \in N(x)$ is 
called a \emph{neighbor} of $x$ and we say that vertices $w$ and $x$ are \emph{adjacent}.  The \emph{closed neighborhood} of $x$ is the set $N_G[x]$ defined by $N_G[x]=N_G(x) \cup \{x\}$.  Let $U \subseteq V$.  The \emph{open neighborhood} of $U$ is the set $N_G(U)=\bigcup_{x\in U} N_G(x)$ and its \emph{closed neighborhood} is the set $N_G[U]=N_G(U) \cup U$. We omit the subscripts from these notations when the graph $G$ is clear from the context.  The set $U$  is a \emph{dominating set} of $G$ if $|N(v) \cap U|\ge 1$ for every vertex $v \in V - U$;  equivalently, if $N[v] \cap U \neq \emptyset$ for every vertex $v \in V$. The
\emph{domination number} of $G$, denoted by $\gamma(G)$, is the minimum cardinality of a
dominating set in $G$, while the \emph{upper domination number}, $\Gamma(G)$, is the maximum cardinality of a minimal dominating set in $G$. A dominating set $U$ is a \emph{perfect} dominating set if $|N[x] \cap U|=1$ for every $x \in V-U$ and is called an \emph{efficient} dominating set if $|N[x] \cap U|=1$ for every $x \in V$.  If $n$ is a positive integer, the set of positive integers not larger than 
$n$ will be denoted $[n]$. 

The Cartesian product, $G \cp H$, of two graphs $G=(V_1,E_1)$ and $H=(V_2,E_2)$ has vertex set $V_1 \times V_2$.  Two vertices in
$G \cp H$ are adjacent if they are equal in one coordinate and adjacent in the other.  To simplify notation, we let $G_{n,m}=P_n\cp P_m$ denote the grid graph having $m$ rows and $n$ columns.   Let $p$ and $q$ be positive integers.  A \emph{double star}, denoted $S(p,q)$, is the tree obtained from a path $u,v$ of order $2$ by adding $p$ vertices adjacent to $u$ and $q$ vertices adjacent to $v$.  The vertices 
$u$ and $v$ are called the \emph{centers} of $S(p,q)$.

\section{Private neighbors} \label{sec:privneighbors}

In 1978, Cockayne, Hedetniemi and Miller~\cite{Cockayne78} introduced the concept of private neighbors in graphs, as follows.
Given a set $U \subset V$ of vertices in a graph $G$, a \emph{private neighbor with respect to the set $U$}, is any vertex $w \in V$ having precisely one neighbor in $U$. That is, $w$ is a private neighbor with respect to $U$ if and only if $|N(w) \cap U|=1$. If $w \in V - U$ and $N(w) \cap U=\{v\}$, then $w$ is called an \emph{external private neighbor} of $v$ with respect to $U$. If $w \in U$ and $N(w) \cap U=\{v\}$, then $w$ is called an \emph{internal private neighbor} of $v$ with respect to $U$.  In other words, $w$ is an internal private neighbor of $v\in U$ with respect to $U$ if $w$ is adjacent to $v$ and $w$ has degree $1$ in the subgraph of $G$ induced by $U$.  In addition, we add one special case.  If $w \in U$ and $N[w] \cap U = \{w\}$, then we say that $w$ is a \emph{self private neighbor} with respect to $U$.  By definition, a self private neighbor with respect to $U$ is an isolated vertex in the subgraph $G[U]$ induced by $U$.  For each type of private neighbor defined here we will omit the phrase ``with respect to $U$'' if the set $U$ is clear from the context.

Thus, with respect to any set $U \subset V$ in a graph $G$, we can associate three numbers $PN(U) = (S(U), I(U), E(U))$, where (i) $S(U)$ is the number of self private neighbors with respect to $U$, (ii) $I(U)$ is the number of internal private neighbors with respect to $U$, and (iii) $E(U)$ is the number of external private neighbors with respect to $U$.  These three types of private neighbors, therefore, give rise to seven types of sets $U$ depending on the types of private neighbors that every vertex in $U$ must have.  For sake of reference
we will refer to these  as being ``irredundance-type'' invariants.
\begin{enumerate}

\item Every vertex in $U$ is a self private neighbor. Such sets are called \emph{independent}, and the \emph{vertex independence number},
    $\alpha(G)$, is the maximum cardinality of an independent set in $G$.
\item  Every vertex in $U$ has an internal private neighbor.   Such sets correspond to \emph{strong matchings} in graphs, the induced subgraphs of which consist of disjoint unions of complete subgraphs of order $2$.
    The \emph{strong matching number}, $\alpha^*(G)$, equals the maximum cardinality of a strong matching in $G$.
\item  Every vertex in $U$ has an external private neighbor. Equivalently, $|N(v)-N[U-\{v\}]|\ge 1$ for every $v \in U$.
Such sets are called \emph{open irredundant sets}.
The \emph{upper open irredundance number}, $OIR(G)$, is the maximum cardinality of an open irredundant set in $G$.
\item Every vertex in $U$ has an external private neighbor or is a self private neighbor.  Equivalently, $|N[v]-N[U-\{v\}]|\ge 1$ for every $v \in U$.  
Such sets are called \emph{irredundant}, and the \emph{upper irredundance number}, $IR(G)$, is the maximum cardinality of an irredundant set in G.
\item  Every vertex in $U$ has  an external private neighbor or an internal private neighbor. 
Equivalently, $|N(v)-N(U-\{v\})|\ge 1$ for every $v \in U$.
Such sets are called \emph{open-open irredundant sets}.  The \emph{upper open-open irredundance number}, $OOIR(G)$, equals the maximum cardinality of an open-open irredundant set in $G$.
\item  Every vertex in $U$  has an internal private neighbor or is a self private neighbor. Such sets are called 
\emph{$1$-dependent sets}, meaning that the maximum degree of the vertices in the subgraph induced by $U$ is at most $1$.
The {\it $1$-dependence number}, $\alpha_1(G)$, equals the maximum cardinality of a $1$-dependent set in $G$.  
\item  Every vertex in $U$ has at least one private neighbor of some kind, either external, internal or self.  
Equivalently, $|N[v]-N(U-\{v\})|\ge 1$ for every $v \in U$.
Such sets are called   \emph{closed-open irredundant sets}.  The \emph{upper closed-open irredundance number}, $COIR(G)$, equals the maximum cardinality of a closed-open irredundant set in $G$.
\end{enumerate}

The concept of irredundance is now well studied, there being more than 200 papers on this, a representative sample being the following \cite{Bollobas79, Cockayne99, Farley83, Finbow03, Hedetniemi21, Mynhardt21}. Cameron~\cite{Cameron-1989} first studied strong (or induced) matchings; Fink and Jacobson~\cite{fj-1985} introduced the concept of $1$-dependence; Farley and Shachum~\cite{Farley83} defined open irredundance, open-open irredundance and closed-open irredundance.  See the papers by Fellows, Fricke, Hedetniemi and Jacobs~\cite{Fellows94} and Cockayne~\cite{Cockayne99}
that present all of these types of irredundance as they relate to each other.  

\section{Maximizing private neighbors} \label{sec:mpn}

In this section we define graphical invariants arising from the three types of private neighbors defined in Section~\ref{sec:privneighbors}. Instead of focusing
on the subsets of vertices each of which has some kind of private neighbor, in this new context we are interested in the set of private 
neighbors themselves.
For each of the seven types of private neighbor combinations there correspondents a maximization parameter.

\begin{enumerate}
\item  $SPN(G)$, the maximum number of self private neighbors with respect to a set $U \subseteq V$. Such a set
$U$ is called an $SPN(G)$-set. This is equivalent to the maximum number of vertices of degree zero in a subgraph $G[U]$ induced by a vertex set $U \subseteq V$.
\item $IPN(G)$, the maximum number of internal private neighbors with respect to a set $U \subseteq V$. Such a set
$U$ is called an $IPN(G)$-set. This is equivalent to the maximum number of vertices of degree one in the subgraph $G[U]$ induced by a vertex set $U \subseteq V$.  
\item $EPN(G)$, the maximum number of external private neighbors with respect to a set $U\subseteq V$. Such a set
$U$ is called an $EPN(G)$-set.
\item $ESPN(G)$, the maximum number of external and self private neighbors with respect to a set $U\subseteq V$.  Such a set
$U$ is called an $ESPN(G)$-set. 
\item  $EISPN(G)$, the maximum number of external, internal, and self private neighbors with respect to a set $U\subseteq V$.
Such a set $U$ is called an $EISPN(G)$-set.
\item $EIPN(G)$, the maximum number of external and internal private neighbors with respect to a set $U\subseteq V$.
Such a set $U$ is called an $EIPN(G)$-set.
\item $ISPN(G)$, the maximum number of internal and self private neighbors with respect to a set $U\subseteq V$. Such a set
$U$ is called an $ISPN(G)$-set.
\end{enumerate}

It is easy to see from the definitions above that $SPN(G) = \alpha(G)$ and $ISPN(G)\ge \alpha_1(G)$.  To our knowledge, the other five invariants have not been studied. These maximum private neighbors invariants, with the exception of $SPN$, are illustrated on $P_8 \cp P_3$ in Figure~\ref{fig:PN}.  The black vertices in each case form a set that realizes the labeled parameter value.

\newpage 

\begin{figure}[htb]
\tikzstyle{every node}=[circle, draw, fill=black!0, inner sep=0pt,minimum width=.2cm]
\begin{center}
\begin{tikzpicture}[thick,scale=1.3]
  \draw(0,0) { 
    +(0.00,4.50) -- +(3.89,4.50)
    +(0.00,4.00) -- +(3.89,4.00)
    +(0.00,3.50) -- +(3.89,3.50)
    +(3.89,4.50) -- +(3.89,3.50)
    +(0.00,4.50) -- +(0.00,3.50)
    +(5.00,4.50) -- +(8.89,4.50)
    +(5.00,4.00) -- +(8.89,4.00)
    +(5.00,3.50) -- +(8.89,3.50)
    +(5.00,4.50) -- +(5.00,3.50)
    +(8.89,4.50) -- +(8.89,3.50)
    +(0.56,4.50) -- +(0.56,3.50)
    +(1.11,4.50) -- +(1.11,3.50)
    +(1.67,4.50) -- +(1.67,3.50)
    +(2.22,4.50) -- +(2.22,3.50)
    +(2.78,4.50) -- +(2.78,3.50)
    +(3.33,4.50) -- +(3.33,3.50)
    +(5.56,4.50) -- +(5.56,3.50)
    +(6.11,4.50) -- +(6.11,3.50)
    +(6.67,4.50) -- +(6.67,3.50)
    +(7.22,4.50) -- +(7.22,3.50)
    +(7.78,4.50) -- +(7.78,3.50)
    +(8.33,4.50) -- +(8.33,3.50)
    +(0.00,2.75) -- +(3.89,2.75)
    +(5.00,2.75) -- +(8.89,2.75)
    +(0.00,2.25) -- +(3.89,2.25)
    +(0.00,1.75) -- +(3.89,1.75)
    +(3.89,2.75) -- +(3.89,1.75)
    +(3.33,2.75) -- +(3.33,1.75)
    +(2.78,2.75) -- +(2.78,1.75)
    +(2.22,2.75) -- +(2.22,1.75)
    +(1.67,2.75) -- +(1.67,1.75)
    +(1.11,2.75) -- +(1.11,1.75)
    +(0.56,2.75) -- +(0.56,1.75)
    +(0.00,2.75) -- +(0.00,1.75)
    +(0.00,1.00) -- +(3.89,1.00)
    +(0.00,0.50) -- +(3.89,0.50)
    +(0.00,0.00) -- +(3.89,0.00)
    +(3.89,1.00) -- +(3.89,0.00)
    +(3.33,1.00) -- +(3.33,0.00)
    +(2.78,1.00) -- +(2.78,0.00)
    +(2.22,1.00) -- +(2.22,0.00)
    +(1.67,1.00) -- +(1.67,0.00)
    +(1.11,1.00) -- +(1.11,0.00)
    +(0.56,1.00) -- +(0.56,0.00)
    +(0.00,1.00) -- +(0.00,0.00)
    +(5.00,2.25) -- +(8.89,2.25)
    +(5.00,1.75) -- +(8.89,1.75)
    +(8.89,2.75) -- +(8.89,1.75)
    +(8.33,2.75) -- +(8.33,1.75)
    +(7.78,2.75) -- +(7.78,1.75)
    +(7.22,2.75) -- +(7.22,1.75)
    +(6.67,2.75) -- +(6.67,1.75)
    +(6.11,2.75) -- +(6.11,1.75)
    +(5.56,2.75) -- +(5.56,1.75)
    +(5.00,2.75) -- +(5.00,1.75)
    +(5.00,1.00) -- +(8.89,1.00)
    +(5.00,0.50) -- +(8.89,0.50)
    +(5.00,0.00) -- +(8.89,0.00)
    +(8.89,1.00) -- +(8.89,0.00)
    +(8.33,1.00) -- +(8.33,0.00)
    +(7.78,1.00) -- +(7.78,0.00)
    +(7.22,1.00) -- +(7.22,0.00)
    +(6.67,1.00) -- +(6.67,0.00)
    +(6.11,1.00) -- +(6.11,0.00)
    +(5.56,1.00) -- +(5.56,0.00)
    +(5.00,1.00) -- +(5.00,0.00)
    +(0.00,4.50) node{}
    +(0.56,4.50) node[fill=black]{}
    +(1.11,4.50) node{}
    +(1.67,4.50) node{}
    +(2.22,4.50) node{}
    +(2.78,4.50) node[fill=black]{}
    +(3.33,4.50) node{}
    +(3.89,4.50) node{}
    +(8.89,4.50) node[fill=black]{}
    +(8.33,4.50) node[fill=black]{}
    +(7.78,4.50) node{}
    +(7.22,4.50) node{}
    +(6.67,4.50) node[fill=black]{}
    +(6.11,4.50) node[fill=black]{}
    +(5.56,4.50) node{}
    +(5.00,4.50) node[fill=black]{}
    +(0.00,4.00) node{}
    +(0.00,3.50) node[fill=black]{}
    +(0.56,3.50) node{}
    +(1.11,3.50) node{}
    +(1.67,3.50) node[fill=black]{}
    +(2.22,3.50) node{}
    +(2.78,3.50) node{}
    +(3.33,3.50) node[fill=black]{}
    +(3.89,3.50) node{}
    +(5.00,3.50) node[fill=black]{}
    +(5.56,3.50) node{}
    +(6.11,3.50) node[fill=black]{}
    +(6.67,3.50) node[fill=black]{}
    +(7.22,3.50) node{}
    +(7.78,3.50) node{}
    +(8.33,3.50) node[fill=black]{}
    +(8.89,3.50) node[fill=black]{}
    +(8.89,4.00) node{}
    +(8.33,4.00) node{}
    +(7.78,4.00) node[fill=black]{}
    +(7.22,4.00) node[fill=black]{}
    +(6.67,4.00) node{}
    +(6.11,4.00) node{}
    +(5.56,4.00) node[fill=black]{}
    +(5.00,4.00) node[fill=black]{}
    +(3.89,4.00) node{}
    +(3.33,4.00) node{}
    +(2.78,4.00) node{}
    +(2.22,4.00) node{}
    +(1.67,4.00) node[fill=black]{}
    +(1.11,4.00) node{}
    +(0.56,4.00) node{}
    +(0.00,2.75) node{}
    +(0.56,2.75) node{}
    +(1.11,2.75) node[fill=black]{}
    +(1.67,2.75) node[fill=black]{}
    +(2.22,2.75) node{}
    +(2.78,2.75) node{}
    +(3.33,2.75) node[fill=black]{}
    +(3.89,2.75) node{}
    +(5.00,2.75) node[fill=black]{}
    +(5.56,2.75) node[fill=black]{}
    +(6.11,2.75) node{}
    +(6.67,2.75) node[fill=black]{}
    +(7.22,2.75) node[fill=black]{}
    +(7.78,2.75) node{}
    +(8.33,2.75) node[fill=black]{}
    +(8.89,2.75) node[fill=black]{}
    +(0.00,2.25) node[fill=black]{}
    +(0.00,1.75) node[fill=black]{}
    +(0.00,1.00) node{}
    +(0.00,0.50) node[fill=black]{}
    +(0.00,0.00) node[fill=black]{}
    +(0.56,1.00) node{}
    +(1.11,1.00) node[fill=black]{}
    +(1.67,1.00) node{}
    +(2.22,1.00) node{}
    +(2.78,1.00) node[fill=black]{}
    +(3.33,1.00) node[fill=black]{}
    +(3.89,1.00) node{}
    +(3.89,0.50) node{}
    +(3.33,0.50) node{}
    +(2.78,0.50) node{}
    +(2.22,0.50) node{}
    +(1.67,0.50) node{}
    +(1.11,0.50) node{}
    +(0.56,0.50) node{}
    +(0.56,0.00) node{}
    +(1.11,0.00) node{}
    +(1.67,0.00) node[fill=black]{}
    +(2.22,0.00) node[fill=black]{}
    +(2.78,0.00) node{}
    +(3.33,0.00) node{}
    +(3.89,0.00) node[fill=black]{}
    +(5.00,1.00) node{}
    +(5.00,0.50) node{}
    +(5.00,0.00) node[fill=black]{}
    +(5.56,1.00) node[fill=black]{}
    +(6.11,1.00) node{}
    +(6.67,1.00) node{}
    +(7.22,1.00) node[fill=black]{}
    +(7.78,1.00) node{}
    +(8.33,1.00) node{}
    +(8.89,1.00) node[fill=black]{}
    +(8.89,0.50) node{}
    +(8.33,0.50) node{}
    +(7.78,0.50) node{}
    +(7.22,0.50) node{}
    +(6.67,0.50) node{}
    +(6.11,0.50) node{}
    +(5.56,0.50) node{}
    +(5.56,0.00) node{}
    +(6.11,0.00) node{}
    +(6.67,0.00) node[fill=black]{}
    +(7.22,0.00) node{}
    +(7.78,0.00) node{}
    +(8.33,0.00) node[fill=black]{}
    +(8.89,0.00) node{}
    +(5.00,2.25) node{}
    +(5.00,1.75) node[fill=black]{}
    +(5.56,1.75) node[fill=black]{}
    +(6.11,1.75) node{}
    +(6.67,1.75) node[fill=black]{}
    +(7.22,1.75) node[fill=black]{}
    +(7.78,1.75) node{}
    +(8.33,1.75) node[fill=black]{}
    +(8.89,1.75) node[fill=black]{}
    +(8.89,2.25) node{}
    +(8.33,2.25) node{}
    +(7.78,2.25) node[fill=black]{}
    +(7.22,2.25) node{}
    +(6.67,2.25) node{}
    +(6.11,2.25) node[fill=black]{}
    +(5.56,2.25) node{}
    +(0.56,2.25) node{}
    +(0.56,1.75) node{}
    +(1.11,1.75) node{}
    +(1.67,1.75) node[fill=black]{}
    +(2.22,1.75) node[fill=black]{}
    +(2.78,1.75) node{}
    +(3.33,1.75) node{}
    +(3.89,1.75) node{}
    +(3.89,2.25) node{}
    +(3.33,2.25) node[fill=black]{}
    +(2.78,2.25) node{}
    +(2.22,2.25) node{}
    +(1.67,2.25) node{}
    +(1.11,2.25) node{}
    +(1.7,3.2) node[rectangle, draw=white!0, fill=white!100]{$EPN(G_{8,3})=16$}   
    +(6.7,3.2) node[rectangle, draw=white!0, fill=white!100]{$IPN(G_{8,3})=13$}
    +(1.7,1.45) node[rectangle, draw=white!0, fill=white!100]{$EIPN(G_{8,3})=22$}   
    +(6.7,1.45) node[rectangle, draw=white!0, fill=white!100]{$ISPN(G_{8,3})=14$}
    +(1.7,-0.3) node[rectangle, draw=white!0, fill=white!100]{$EISPN(G_{8,3})=24$}   
    +(6.7,-0.3) node[rectangle, draw=white!0, fill=white!100]{$ESPN(G_{8,3})=22$}

  };
\end{tikzpicture}
\end{center}
\vskip -0.6 cm \caption{Maximizing private neighbors on $P_8 \cp P_3$} \label{fig:PN}
\end{figure}
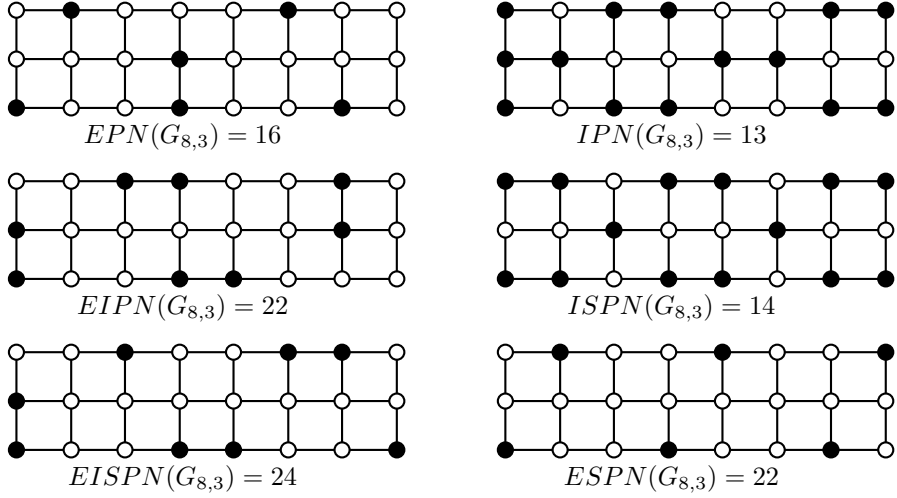

By their very definitions these seven private neighbor parameters are naturally related to one another.  
The following inequalities follow directly from the definitions.
\begin{prop}
If $G$ is any graph, then 
\begin{enumerate}
\item $SPN(G) \le ESPN(G) \le EISPN(G)$,
\item $SPN(G) \le ISPN(G) \le EISPN(G)$,
\item $IPN(G) \le ISPN(G) \le EISPN(G)$,
\item $IPN(G) \le EIPN(G) \le EISPN(G)$,
\item $EPN(G) \le ESPN(G) \le EISPN(G)$, and
\item $EPN(G) \le  EIPN(G) \le EISPN(G)$.
\end{enumerate}  
\end{prop}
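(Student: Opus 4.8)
The plan is to verify each of the six chains of inequalities directly from the definitions of the seven maximization parameters in Section~\ref{sec:mpn}, using the general principle that enlarging the collection of private-neighbor types one is allowed to count can only increase (or leave unchanged) the maximum. More precisely, for a fixed set $U \subseteq V$, let $S(U)$, $I(U)$, $E(U)$ denote the number of self, internal, and external private neighbors with respect to $U$, as in Section~\ref{sec:privneighbors}. Since the three types are defined by mutually exclusive membership/degree conditions in $G[U]$ (a vertex of $U$ that is isolated in $G[U]$ is self, a vertex of $U$ with exactly one neighbor in $U$ is internal, and a vertex outside $U$ with exactly one neighbor in $U$ is external), for any two subsets $A \subseteq B$ of $\{S,I,E\}$ we have $\sum_{t \in A} t(U) \le \sum_{t \in B} t(U)$. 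Taking the maximum over all $U$ of each side, and noting that a $U$ maximizing the smaller sum is a legitimate candidate for the larger, yields $\big(\text{max over } U \text{ of } \textstyle\sum_{t\in A} t(U)\big) \le \big(\text{max over } U \text{ of } \sum_{t\in B} t(U)\big)$.

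With this principle in hand, each individual inequality is an instance where the left side corresponds to a subset of private-neighbor types contained in the subset for the right side: for example, $SPN(G) \le ESPN(G)$ is the case $A = \{S\} \subseteq \{E,S\} = B$; $ESPN(G) \le EISPN(G)$ is $A = \{E,S\} \subseteq \{E,I,S\} = B$; $IPN(G) \le EIPN(G)$ is $A = \{I\} \subseteq \{E,I\} = B$; and so on for $ISPN$, $EPN$, and $EIPN$. One then simply records the seven $\le$ statements, observing that all six displayed chains together use only the containments $\{S\} \subseteq \{E,S\}$, $\{S\} \subseteq \{I,S\}$, $\{I\} \subseteq \{I,S\}$, $\{I\} \subseteq \{E,I\}$, $\{E\} \subseteq \{E,S\}$, $\{E\} \subseteq \{E,I\}$, and $\{E,S\},\{I,S\},\{E,I\} \subseteq \{E,I,S\}$, each of which is immediate.

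I do not anticipate any real obstacle here; the only thing to be careful about is the logical direction of the maximization argument (a maximizer for the restricted count need not be a maximizer for the enlarged count, but it is still a feasible point, which is all that is needed for the inequality). It is also worth double-checking that the parameter names in the statement match the type-sets intended — in particular that $ISPN$ counts internal and self, $EIPN$ counts external and internal, and $ESPN$ counts external and self — which agrees with the definitions given just above the proposition. Given how short each verification is, I would present the proof as a single short paragraph invoking the monotonicity principle once and then listing the six chains, rather than belaboring each of the seven inequalities separately.

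\begin{proof}
For a set $U \subseteq V$ write $S(U)$, $I(U)$, $E(U)$ for the numbers of self, internal, and external private neighbors with respect to $U$. For any $U$, the three quantities count disjoint sets of vertices, so for subsets $A \subseteq B$ of $\{S,I,E\}$ we have $\sum_{t\in A} t(U) \le \sum_{t\in B} t(U)$. Hence a set realizing the maximum of $\sum_{t\in A} t(U)$ over all $U$ is a feasible choice for $\sum_{t\in B} t(U)$, giving $\max_U \sum_{t\in A} t(U) \le \max_U \sum_{t\in B} t(U)$. Each inequality in items 1--6 is an instance of this with an appropriate containment of type-sets: $\{S\}\subseteq\{E,S\}\subseteq\{E,I,S\}$ gives item 1; $\{S\}\subseteq\{I,S\}\subseteq\{E,I,S\}$ gives item 2; $\{I\}\subseteq\{I,S\}\subseteq\{E,I,S\}$ gives item 3; $\{I\}\subseteq\{E,I\}\subseteq\{E,I,S\}$ gives item 4; $\{E\}\subseteq\{E,S\}\subseteq\{E,I,S\}$ gives item 5; and $\{E\}\subseteq\{E,I\}\subseteq\{E,I,S\}$ gives item 6.
\end{proof}
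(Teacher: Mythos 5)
Your proof is correct and matches the paper's approach: the paper simply asserts that these inequalities "follow directly from the definitions," and your monotonicity principle (that adding allowed private-neighbor types can only increase the maximum, since a maximizer for the smaller type-set remains feasible for the larger) is exactly the intended justification. The disjointness observation is not even needed — nonnegativity of each count already gives $\sum_{t\in A} t(U) \le \sum_{t\in B} t(U)$ for $A \subseteq B$ — but it does no harm.
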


The private neighbor maximization parameters we have defined on a graph $G$ measure the largest subset of vertices that are the 
corresponding type(s) of private neighbors.  The irredundance-type invariants are the maximum size of
a subset of vertices, each of which has the specified  type(s) of private neighbors. For example, if $D$ is an open-open irredundant 
set in $G$ with $|D|=OOIR(G)$ and $v\in D$, then  $v$ has an external private neighbor or an internal private neighbor with respect to $D$.
It follows that $OOIR(G)=|D|\leq EIPN(G)$, which is inequality $(5)$ in the following proposition.  Justification of the other six inequalities is similar.  

\begin{prop} \label{prop:inequalities}
If $G$ is any graph, then
\begin{enumerate}
\item $\alpha(G) = SPN(G)$,
\item $2\alpha^*(G) \le IPN(G)$, 
\item $OIR(G) \le EPN(G)$,
\item $IR(G) \le ESPN(G)$,
\item $OOIR(G) \le EIPN(G)$,
\item $\alpha_1(G) \le ISPN(G)$,
\item $COIR(G) \le EISPN(G)$.
\end{enumerate}
\end{prop}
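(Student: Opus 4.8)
The plan is to verify each of the seven inequalities by the same template: take an extremal set for the irredundance-type invariant on the left, observe that every vertex in that set contributes at least one private neighbor of the allowed type(s), and then argue that distinct vertices of the set contribute \emph{distinct} private neighbors, so that the count of private neighbors is at least the size of the set. For item (1) there is nothing to prove beyond the remark already made in the text: an independent set is precisely a set all of whose vertices are self private neighbors, so $\alpha(G)=SPN(G)$ is an equality by definition.

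For item (2), let $M$ be a strong matching with $|M|=\alpha^*(G)$ and take $U=V(M)$, the set of $2\alpha^*(G)$ endpoints. In $G[U]$ every vertex has degree exactly $1$, so every vertex of $U$ is an internal private neighbor with respect to $U$; hence $IPN(G)\ge |U|=2\alpha^*(G)$. For items (3)--(7), let $D$ be an extremal set for the left-hand invariant ($OIR$, $IR$, $OOIR$, $\alpha_1$, $COIR$ respectively). By the defining characterization of that invariant, each $v\in D$ has a private neighbor $w_v$ with respect to $D$ whose type lies among those counted by the right-hand parameter (external for $OIR$; external or self for $IR$; external or internal for $OOIR$; internal or self for $\alpha_1$; external, internal or self for $COIR$). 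I would then take the set of private neighbors $W=\{w_v : v\in D\}$ and show $|W|=|D|$, which gives the right-hand parameter $\ge |D|$, i.e.\ the stated inequality; note the right-hand set realizing the parameter is $D$ itself in each case.

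The one genuine point to check is that the assignment $v\mapsto w_v$ is injective, and this is where I would be a little careful rather than waving hands. Suppose $w_v=w_{v'}=w$ for $v\neq v'$ in $D$. If $w$ is an external or internal private neighbor witnessing both, then $|N(w)\cap D|=1$, yet $w$ is ``private to'' both $v$ and $v'$, forcing $v=v'$ once one untangles the cases: for an external private neighbor $N(w)\cap D=\{v\}=\{v'\}$ directly; for an internal private neighbor $w\in D$ and the unique $D$-neighbor of $w$ is simultaneously $v$ and $v'$. If $w$ is a self private neighbor then $w\in D$ and $N(w)\cap D=\emptyset$, so $w$ is its own (and only its own) self private neighbor, meaning $w=v=v'$. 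The only mixed case, $w$ serving as one type for $v$ and another type for $v'$, is likewise impossible: a self private neighbor has no $D$-neighbors and so cannot be an external or internal private neighbor of anyone, and an external private neighbor lies in $V-D$ while an internal or self one lies in $D$. Hence $v\mapsto w_v$ is injective in every case, $|W|=|D|$, and all seven inequalities follow. I do not expect any real obstacle here; the ``hard part'' is simply being systematic about the case analysis for injectivity so that the single argument cleanly covers all of (3)--(7) at once.
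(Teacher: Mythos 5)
Your proposal is correct and follows essentially the same route as the paper, which takes an extremal set $D$ for the left-hand invariant and observes that each $v\in D$ contributes a private neighbor of the allowed type with respect to $D$ itself, so the right-hand parameter is at least $|D|$. The only difference is that you spell out the injectivity of $v\mapsto w_v$ (the paper leaves this implicit with ``Justification of the other six inequalities is similar''), and your case analysis there is sound.
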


We note that it is not the case that in every graph $G$ there exists a strong matching $M$ such that $2|M|=IPN(G)$.  
For example, the graph $G$ in Figure~\ref{fig:IPNex} has $IPN(G)=6$ and $2\alpha^*(G)=4$.  The white vertices form an $IPN(G)$-set, and it is easy to see that $\alpha^*(G)=2$.

\begin{figure}[htb]
\begin{center}
\begin{tikzpicture}[scale=.9,style=thick,x=1cm,y=1cm]
\def\vr{4pt} 
\path (-2,2) coordinate (u);  \path (-1,1) coordinate (x3); \path (-1,2) coordinate (x2); \path (-1,3) coordinate (x1);
\path (0,2) coordinate (a);  \path (1,2) coordinate (b); \path (2,1) coordinate (y3); \path (2,2) coordinate (y2);
\path (2,3) coordinate (y1);  \path (3,2) coordinate (v);
\foreach \i in {1,...,3} {\draw (a)--(x\i); \draw (b)--(y\i); }
\foreach \i in {1,3} {\draw (u)--(x\i); \draw (v)--(y\i); }
\draw (a) -- (b); 
\foreach \i in {1,...,3}
{  \draw (y\i)  [fill=white] circle (\vr); \draw (x\i)  [fill=white] circle (\vr);}
\draw (a)  [fill=white] circle (\vr); \draw (b)  [fill=white] circle (\vr);
\draw (u)  [fill=black] circle (\vr); \draw (v)  [fill=black] circle (\vr);
\end{tikzpicture}
\end{center}
\vskip -0.5 cm
\caption{$IPN(G)=6$ and $2\alpha^*(G)=4$} \label{fig:IPNex}
\end{figure}
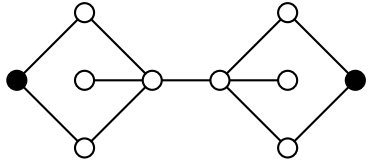

On the other hand, each of the other six maximum private neighbor invariants  can be realized in any graph using a set of vertices that has the corresponding irredundance-type property expressed in Proposition~\ref{prop:inequalities}.  We hasten to add, however, that the set 
having the corresponding irredundance-type property may not be a maximum such set.  For example, consider the relationship expressed in Proposition~\ref{prop:irrset} below.  It can be shown that $ESPN(K_{4,7})=9$ and $IR(K_{4,7})=7$, but the only irredundant sets $U$ that 
have $E(U)+S(U)=9$ have cardinality $2$.

\begin{prop} \label{prop:openirrset}
If $A$ is an $EPN(G)$-set, then there exists an $EPN(G)$-set $B$, such that $B$ is open irredundant and  $B\subseteq A$. 
\end{prop}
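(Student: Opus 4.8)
The goal is to take an arbitrary $EPN(G)$-set $A$ — a set $U$ achieving the maximum number of external private neighbors — and shrink it to a subset $B \subseteq A$ that is still an $EPN(G)$-set and is additionally open irredundant, meaning every vertex of $B$ has an external private neighbor with respect to $B$. The plan is to delete from $A$, one at a time, the vertices that are ``useless'' in the sense of not having an external private neighbor with respect to the current set, and to argue that each such deletion does not decrease the count $E(\cdot)$ of external private neighbors. Since $A$ achieves the maximum, the count can never go up either, so it stays exactly equal to $EPN(G)$ throughout; and the process terminates with a set in which every remaining vertex does have an external private neighbor, i.e.\ an open irredundant set.

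**The key step.** The heart of the argument is the following claim: if $U \subseteq V$ and $v \in U$ has \emph{no} external private neighbor with respect to $U$, then $E(U - \{v\}) \ge E(U)$. To see this, partition the external private neighbors with respect to $U$ according to whether their unique $U$-neighbor is $v$ or not. By hypothesis, no external private neighbor of $U$ has $v$ as its unique neighbor in $U$, so every $w$ counted by $E(U)$ satisfies $N(w) \cap U \subseteq U - \{v\}$ and $|N(w)\cap U| = 1$; hence $|N(w) \cap (U-\{v\})| = 1$ as well. We must also check that such a $w$ still lies outside the set, i.e.\ $w \in V - (U - \{v\})$: this is automatic since $w \in V - U \subseteq V - (U-\{v\})$. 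Therefore every external private neighbor with respect to $U$ remains an external private neighbor with respect to $U - \{v\}$, giving $E(U-\{v\}) \ge E(U)$. (Some vertices of $V - U$ that were not private before — in particular $v$ itself, and vertices with exactly two $U$-neighbors one of which was $v$ — may \emph{become} private, which is why the inequality can be strict; but we only need ``$\ge$''.)

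**Assembling the argument.** Start with $B_0 = A$, so $E(B_0) = EPN(G)$. Given $B_i$, if every vertex of $B_i$ has an external private neighbor with respect to $B_i$, stop and set $B = B_i$. Otherwise pick $v \in B_i$ with no external private neighbor with respect to $B_i$ and put $B_{i+1} = B_i - \{v\}$. By the claim, $E(B_{i+1}) \ge E(B_i) \ge EPN(G)$; by maximality of $EPN(G)$ we get $E(B_{i+1}) = EPN(G)$. Since $|B_{i+1}| < |B_i|$, the process halts after at most $|A|$ steps, yielding $B \subseteq A$ with $E(B) = EPN(G)$ — so $B$ is an $EPN(G)$-set — and with every vertex of $B$ having an external private neighbor with respect to $B$, so $B$ is open irredundant by definition (item~3 in Section~\ref{sec:privneighbors}).

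**Anticipated obstacle.** There is no deep difficulty here; the one point that needs care is the membership bookkeeping in the key step — confirming that a vertex which is an external private neighbor with respect to $U$ (so it lies in $V - U$ and has exactly one neighbor in $U$, different from $v$) genuinely satisfies the definition of external private neighbor with respect to the smaller set $U - \{v\}$, both the ``exactly one neighbor'' condition and the ``outside the set'' condition. Once that is stated cleanly the rest is a routine monotonicity-plus-termination wrapper. One should also note explicitly that the empty set vacuously satisfies open irredundance, so the process is well defined even in the degenerate case $A = \emptyset$ (which forces $EPN(G) = 0$).
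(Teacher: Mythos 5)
Your proposal is correct and follows essentially the same route as the paper: iteratively delete a vertex of $A$ lacking an external private neighbor, observe that $E(\cdot)$ cannot decrease under such a deletion (and cannot increase past the maximum), and terminate with an open irredundant subset realizing $EPN(G)$. The paper states the key inequality $E(A)\le E(A')$ as clear, whereas you verify the membership bookkeeping explicitly, which is a welcome addition but not a different argument.
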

\begin{proof}
Suppose $A$ is not open irredundant and $E(A)=EPN(G)$.  Let $a \in A$ such that $a$ does not have an external private neighbor with
respect to $A$ and let $A'=A-\{a\}$.  It is clear that $E(A) \le E(A')$, and since $E(A)=EPN(G)$ we have $E(A)= E(A')$.
Therefore, by repeating this process, we conclude that there exists an $EPN(G)$-set $B$ that is open irredundant and a subset of $A$. \qed 
\end{proof}

\begin{prop} \label{prop:irrset}
If $A$ is an $ESPN(G)$-set, then there exists an $ESPN(G)$-set $B$, such that $B$ is irredundant and  $B\subseteq A$. 
\end{prop}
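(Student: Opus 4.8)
The plan is to mimic the deletion argument used for Proposition~\ref{prop:openirrset}, but to track the quantity $E(U)+S(U)$ rather than $E(U)$ alone. Suppose $A$ is an $ESPN(G)$-set that is not irredundant. Then there is a vertex $a\in A$ with $|N[a]-N[A-\{a\}]|=0$; equivalently, $a$ has no external private neighbor with respect to $A$ and $a$ is not a self private neighbor with respect to $A$. Set $A'=A-\{a\}$. The goal is to show $E(A')+S(A')\ge E(A)+S(A)$, so that $A'$ is again an $ESPN(G)$-set; iterating this and using that the empty set is (vacuously) irredundant then produces the desired $B\subseteq A$ after finitely many steps, since $|A|$ strictly decreases at each deletion.

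The core of the argument is a short case analysis of how the relevant vertex roles change when $a$ is deleted. First, since $a$ is not a self private neighbor with respect to $A$, it has a neighbor in $A$, hence a neighbor in $A'$ (as $a\notin N(a)$). Second, no self private neighbor is lost: if $w$ is a self private neighbor with respect to $A$, then $w\ne a$ and $N[w]\cap A=\{w\}$, so $a\notin N[w]$ and $N[w]\cap A'=\{w\}$ still; thus $S(A')\ge S(A)$. Third, no external private neighbor is lost: if $w$ is an external private neighbor of $v$ with respect to $A$, then $N(w)\cap A=\{v\}$ with $v\ne a$ (because $a$ has no external private neighbor), so $a\notin N(w)$ and $N(w)\cap A'=\{v\}$, while $w\in V-A\subseteq V-A'$; hence $w$ remains an external private neighbor. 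The only vertex of $V-A'$ not already in $V-A$ is $a$ itself, and if $a$ has exactly one neighbor in $A'$ then $a$ becomes an external private neighbor with respect to $A'$, which only increases $E$. Combining these observations, $E(A')+S(A')\ge E(A)+S(A)=ESPN(G)$, forcing equality.

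I expect the only delicate point to be the bookkeeping around the deleted vertex $a$: one must observe that $a$ contributes no external private neighbor of its own to the $A$-tally (so nothing is subtracted when $a$ is removed), and then check that $a$'s own membership in $V-A'$ can only add to $E(A')$. Everything else is routine, and termination is immediate from the strict decrease of $|A|$ at each step.
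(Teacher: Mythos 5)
Your proof is correct and follows essentially the same vertex-deletion argument as the paper: remove an $a\in A$ witnessing non-irredundance, verify that $S$ and $E$ do not decrease, and iterate. Your phrasing of the hypothesis on $a$ (no external private neighbor and not a self private neighbor) is equivalent to the paper's $N[a]\subseteq N[A-\{a\}]$, and your extra remarks on termination and on $a$ possibly becoming a new external private neighbor are harmless additions.
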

\begin{proof}
Suppose $A$ is not an irredundant set in $G$.  Let $a \in A$ be such that $N[a] - N[A-\{a\}]=\emptyset$ (or equivalently $N[a] \subseteq N[A-\{a\}]$). It follows that $a$ is not a self private neighbor with respect to $A$.   Let $A'=A-\{a\}$.    We claim that $A'$ is an $ESPN(G)$-set.  It is clear that $S(A')\ge S(A)$. Let $w$ be an external private neighbor with respect to $A$.  Then, there exists a unique $x \in A$ such that $\{x\}=N[w] \cap A$.  Since $N[a] \subseteq N[A-\{a\}]$, it follows that $x \neq a$ and hence $w$ is an external private neighbor with respect to $A'$.   Therefore, 
\[ESPN(G)=S(A)+E(A)\le S(A')+E(A')\le ESPN(G)\,.\]  
By repeating this process, we conclude that there exists a $ESPN(G)$-set that is irredundant and a subset of $A$.  \qed
\end{proof}

\begin{prop} \label{prop:open-openirrset}
If $A$ is a $EIPN(G)$-set, then there exists a $EIPN(G)$-set $B$, such that $B$ is open-open irredundant and  $B\subseteq A$. 
\end{prop}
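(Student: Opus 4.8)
The plan is to mimic the deletion argument already used in Propositions~\ref{prop:openirrset} and~\ref{prop:irrset}, now adapted to the "external or internal private neighbor" property that defines open-open irredundance. Suppose $A$ is an $EIPN(G)$-set that is not open-open irredundant. Then there is a vertex $a \in A$ that has neither an external nor an internal private neighbor with respect to $A$; equivalently, $N(a) \subseteq N(A - \{a\})$. I would set $A' = A - \{a\}$ and argue that $A'$ is still an $EIPN(G)$-set, i.e.\ that $E(A') + I(A') \ge E(A) + I(A')$, which combined with $E(A) + I(A) = EIPN(G)$ and the general upper bound $E(A') + I(A') \le EIPN(G)$ forces equality; then induct (repeat the deletion) until the set is open-open irredundant.

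The core of the argument is therefore to show that deleting such an $a$ does not destroy any external or internal private neighbor, net of the count. First consider an external private neighbor $w$ of some $x \in A$ with respect to $A$, so $N(w) \cap A = \{x\}$. Since $a$ has no internal private neighbor, in particular $w \ne$ "the internal private neighbor of $a$"; but more to the point, because $a \notin N(w)$ unless $x = a$, and $x = a$ is impossible since $w \notin N(a)$ would contradict... — here I need to be careful. If $x = a$, then $w \in N(a) \subseteq N(A-\{a\})$, so $w$ has a neighbor in $A - \{a\} = A'$, contradicting $N(w) \cap A = \{a\}$. Hence $x \ne a$, so $N(w) \cap A' = \{x\}$ and $w$ is still external with respect to $A'$ (note $w \notin A'$ since $w \notin A$). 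So every external private neighbor with respect to $A$ survives, giving $E(A') \ge E(A)$. Next consider an internal private neighbor $w$ of some $x \in A$ with respect to $A$: $w \in A$, $w$ has degree exactly $1$ in $G[A]$, with its unique $A$-neighbor being $x$. If $w \ne a$ and $x \ne a$, then $w$ still has degree $1$ in $G[A']$ (its only neighbor $x$ remains), so $w$ is internal with respect to $A'$. The cases $w = a$ and $x = a$ need attention: $w = a$ is excluded because $a$ has no internal private neighbor; if $x = a$, then $w$ had its unique $A$-neighbor equal to $a$, so deleting $a$ makes $w$ isolated in $G[A']$ — $w$ becomes a self private neighbor, not an internal one, so we could lose it from the internal count. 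This is the one genuine subtlety.

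The hard part, then, is handling the vertices $w \in A$ whose unique $A$-neighbor was $a$ — call this set $W = \{w \in A' : N(w) \cap A = \{a\}\}$. These are exactly the internal private neighbors of $a$ in $A$; but $a$ was assumed to have \emph{no} internal private neighbor, which means $|W| = 0$. Wait — $a$ having no internal private neighbor means no vertex of $A$ has $a$ as its \emph{unique} $A$-neighbor in the degree-$1$ sense, i.e.\ exactly $W = \emptyset$ is what "no internal private neighbor of $a$" asserts together with $a$ itself not being one. So in fact $W$ is empty and no internal private neighbor is lost either: every internal private neighbor $w$ of $x \in A$ with respect to $A$ has $x \ne a$ (else $w \in W = \emptyset$) and $w \ne a$ (since $a$ has no internal private neighbor), hence survives in $A'$. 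Therefore $I(A') \ge I(A)$ as well, and $E(A') + I(A') \ge E(A) + I(A) = EIPN(G)$, completing the induction. I would write this up cleanly by first recording the observation $N(a) \subseteq N(A - \{a\})$, then the two paragraphs above as two short claims ($E(A') \ge E(A)$ and $I(A') \ge I(A)$), then the chain of inequalities and the iteration, closing with \qed; I expect the only place a careless reader would stumble is the $x = a$ case for internal neighbors, so I would state explicitly that "$a$ has no internal private neighbor with respect to $A$" is precisely the statement that no vertex of $A$ has $a$ as its sole neighbor in $G[A]$.
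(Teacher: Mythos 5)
Your overall strategy is exactly the paper's: delete a vertex $a\in A$ with $N(a)\subseteq N(A-\{a\})$, check that the count of external plus internal private neighbors does not drop, and iterate. Your treatment of the external private neighbors and of the case $x=a$ (no vertex of $A$ can have $a$ as its sole neighbor in $G[A]$, since such a vertex would lie in $N(a)$ but not in $N(A-\{a\})$) is correct and matches the paper. The gap is in how you dispose of the case $w=a$. You write that ``$w=a$ is excluded because $a$ has no internal private neighbor,'' but this conflates two different statements: ``$a$ \emph{has} no internal private neighbor'' means no vertex of $A$ has $a$ as its unique neighbor in $G[A]$, whereas $w=a$ means $a$ itself \emph{is} an internal private neighbor, i.e.\ $|N(a)\cap A|=1$. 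The hypothesis $N(a)\subseteq N(A-\{a\})$ gives the former but does not rule out the latter: if $a$ is a leaf of $G[A]$ attached to a vertex $x$ that has some other neighbor in $A$, and every neighbor of $a$ outside $A$ is dominated by $A-\{a\}$, then $a$ has no open private neighbor and yet is counted in $I(A)$. In that situation $a$ drops out of the internal count when you pass to $A'$, so your intermediate claim $I(A')\ge I(A)$ is not established by your argument; indeed, if this case occurs for an $EIPN(G)$-set, then $E(A')\ge E(A)+1$ together with $E(A')+I(A')\le EIPN(G)$ forces $I(A')<I(A)$.

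The proposition survives because the quantity that matters is the sum $E+I$, not each summand separately: when $w=a$ occurs, $a\notin A'$ and $N(a)\cap A'=N(a)\cap A$ is still a singleton, so $a$ becomes an \emph{external} private neighbor with respect to $A'$, and the loss to $I$ is exactly compensated by a gain to $E$. The clean fix is to prove the single inequality $E(A')+I(A')\ge E(A)+I(A)$, handling the possible migration of $a$ from the internal to the external count explicitly, rather than the two separate inequalities $E(A')\ge E(A)$ and $I(A')\ge I(A)$. (For what it is worth, the paper's own write-up also passes over this case---it asserts that every internal private neighbor $y$ of $A$ remains internal with respect to $A'$ without checking $y\ne v$---but since its displayed inequality concerns only the sum, the omission there is harmless; your version, by splitting the sum, turns it into a false intermediate claim.)
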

\begin{proof}
Suppose that $A$ is an $EIPN(G)$-set that is not open-open irredundant.  That is, there exists $v \in A$ such that $N(v) \subseteq N(A-\{v\})$.  Thus, if $v'\in N(v) \cap A$, then $|N(v')\cap A| \ge 2$.  Similarly, if $v'' \in N(v) \cap (V(G)-A)$, then 
$|N(v'')\cap A| \ge 2$.  Let $A'=A-\{v\}$.  Suppose $x$ is an external private neighbor of $A$.  That is, $x \notin A$ and $|N(x) \cap A|=1$.  Since $v$ does not have an external private neighbor with respect to $A$, it follows that $|N(x) \cap A'|=1$.  Now, suppose that
$y$ is an internal private neighbor of $A$.  This means that there exists $z \in A$ such that $N(y) \cap A=\{z\}$, which implies that
$z \neq v$ since every neighbor of $v$ in $A$ has at least two neighbors in $A$.  It follows that $N(y) \cap A'=\{z\}$ and hence $y$
is an internal private neighbor with respect to $A'$.  We infer that 
\[EIPN(G)=E(A)+I(A) \le E(A')+I(A')\le EIPN(G)\,,\] and it follows that $A'$ is an $EIPN(G)$-set.  
Therefore, by repeating this process, we conclude that there exists an $EIPN(G)$-set $B$ that is open-open irredundant and a subset of $A$.
\qed
\end{proof}

\medskip

\begin{prop} \label{prop:1-dependent}
If $A$ is a $ISPN(G)$-set, then there exists a $ISPN(G)$-set $B$, such that $\alpha_1(G)=|B|$ and  $B\subseteq A$. 
\end{prop}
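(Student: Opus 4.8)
The plan is to follow the same ``delete an obstructing vertex'' strategy used in Propositions \ref{prop:openirrset}--\ref{prop:open-openirrset}. The first thing I would record is an elementary reformulation: for any $U\subseteq V$, a vertex $u\in U$ is a self private neighbor w.r.t.\ $U$ exactly when $\deg_{G[U]}(u)=0$, and an internal private neighbor w.r.t.\ $U$ exactly when $\deg_{G[U]}(u)=1$, and these two cases are mutually exclusive. Hence $I(U)+S(U)=|\{u\in U:\deg_{G[U]}(u)\le 1\}|$; in particular, if $U$ is $1$-dependent then every vertex of $U$ is counted, so $I(U)+S(U)=|U|$.

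The core step is the following. Suppose $A$ is an $ISPN(G)$-set that is not $1$-dependent, choose $v\in A$ with $\deg_{G[A]}(v)\ge 2$, and put $A'=A-\{v\}$. I would show $I(A')+S(A')\ge I(A)+S(A)$. First, $v$ itself contributes nothing to $I(A)+S(A)$ because its degree in $G[A]$ is at least $2$. Second, any vertex $w$ counted by $I(A)+S(A)$ has $\deg_{G[A]}(w)\le 1$, so $w\ne v$ and thus $w\in A'$; since deleting a vertex cannot increase an induced degree, $\deg_{G[A']}(w)\le\deg_{G[A]}(w)\le 1$, so $w$ is still counted by $I(A')+S(A')$. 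Therefore $I(A')+S(A')\ge I(A)+S(A)=ISPN(G)$, and maximality of $ISPN(G)$ forces equality, so $A'$ is again an $ISPN(G)$-set.

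Iterating the core step strictly decreases $|A|$ at each stage, so it terminates with a set $B\subseteq A$ that is $1$-dependent and still satisfies $I(B)+S(B)=ISPN(G)$; thus $B$ is an $ISPN(G)$-set. Since $B$ is $1$-dependent, $|B|=I(B)+S(B)=ISPN(G)$, so on one hand $|B|\le\alpha_1(G)$, and on the other Proposition \ref{prop:inequalities}(6) gives $\alpha_1(G)\le ISPN(G)=|B|$; hence $\alpha_1(G)=|B|$, which is what we want. I do not foresee a real obstacle: the only delicate point is the monotonicity in the core step, and it rests entirely on the fact that vertex deletion only lowers induced degrees, so no previously counted vertex can be lost. (As a bonus, this argument yields $ISPN(G)=\alpha_1(G)$ for every graph $G$, upgrading inequality~(6) of Proposition \ref{prop:inequalities} to an equality.)
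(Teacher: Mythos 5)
Your proposal is correct and follows essentially the same argument as the paper: delete a vertex of induced degree at least $2$, observe that every self or internal private neighbor survives (the paper phrases this as a case analysis on whether the deleted vertex was the unique neighbor, you phrase it via monotonicity of induced degrees, but these are the same observation), and iterate until the set is $1$-dependent. Your explicit final step deducing $\alpha_1(G)=|B|$ (and hence $ISPN(G)=\alpha_1(G)$) is a welcome tidying of a point the paper leaves implicit, but it is not a different route.
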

\begin{proof}
Let $A$ be an $ISPN(G)$-set such that $\delta(G[A]) \ge 2$.  Let $a \in A$ such that $a$ has at least two neighbors in $G[A]$, 
and let $A'=A-\{a\}$.  If $w$ is a self private neighbor with respect to $A$, then $w\neq a$ since $\deg_{G[A]}(a) \ge 2$.  It follows
that $w$ is a self private neighbor with respect to $A'$.  Now, suppose that $w$ is an internal private neighbor with respect to $A$.  That is, there exists $y \in A$ such that $N(w) \cap A=\{y\}$.  If $y=a$, then $w$ is a self private neighbor with respect to $A'$.  On
the other hand, if $y \neq a$, then $w$ is an internal private neighbor with respect to $A'$.  We infer that $A$ and $A'$ have the same
number of self and internal private neighbors.  Therefore, by repeating the process of removing vertices that have degree at least $2$
in the subgraph induced by the $ISPN(G)$-set we arrive at a $1$-dependent set that is also an $ISPN(G)$-set.  \qed
\end{proof}

\begin{prop} \label{prop:closed-openirrset}
If $A$ is a $EISPN(G)$-set, then there exists a $EISPN(G)$-set $B$, such that $B$ is closed-open irredundant and  $B\subseteq A$. 
\end{prop}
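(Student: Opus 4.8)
The plan is to imitate the deletion arguments of the preceding propositions. Suppose $A$ is an $EISPN(G)$-set that is not closed-open irredundant, fix $v\in A$ with $N[v]-N(A-\{v\})=\emptyset$, i.e.\ $N[v]\subseteq N(A-\{v\})$, and put $A'=A-\{v\}$. First I would unpack this hypothesis into a few facts about $v$: since $v\in N(A-\{v\})$, the vertex $v$ has a neighbor in $A$ and so is not a self private neighbor with respect to $A$; and if some $w\in N(v)$ had $N(w)\cap A=\{v\}$, then $w\in N[v]$ would have no neighbor in $A-\{v\}$, contradicting $N[v]\subseteq N(A-\{v\})$. Hence no vertex is an external or internal private neighbor of $v$ with respect to $A$; informally, no private neighbor counted by $EISPN$ is owned by $v$.

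The core step is to show $E(A')+I(A')+S(A')\ge E(A)+I(A)+S(A)$, which I would prove by checking that the identity map sends each private neighbor (external, internal, or self) of $A$ to a private neighbor of $A'$; since the three kinds of private neighbors form disjoint sets, this suffices. For a private neighbor $w$ of $A$ I distinguish cases. If $w$ is a self private neighbor, then $N[w]\cap A=\{w\}$, so $v\notin N[w]$ and $w$ is still a self private neighbor of $A'$. If $w$ is an external private neighbor, its unique neighbor $x$ in $A$ satisfies $x\neq v$ by the facts above, so $v\notin N(w)$ and $w$ remains an external private neighbor of $A'$. If $w$ is an internal private neighbor with $w\neq v$, then again its owner $x$ satisfies $x\neq v$, hence $v\notin N(w)$, $N(w)\cap A'=\{x\}$, and $w$ is still an internal private neighbor of $A'$. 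The remaining case, $w=v$ being an internal private neighbor of $A$, is the delicate one: then $N(v)\cap A=\{x\}$ with $x\neq v$, so after deleting $v$ we still have $N(v)\cap A'=\{x\}$ while $v\notin A'$, and $v$ has simply turned from an internal private neighbor into an external private neighbor of $A'$. This internal-to-external conversion is exactly what keeps the private-neighbor count from dropping, and spotting that it must be handled is the main obstacle.

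Finally, combining the inequality with the maximality of $A$ forces $E(A')+I(A')+S(A')=EISPN(G)$, so $A'$ is an $EISPN(G)$-set with $A'\subsetneq A$; since $V(G)$ is finite, repeating the deletion of non-closed-open-irredundant vertices produces a closed-open irredundant $EISPN(G)$-set $B\subseteq A$, as required.
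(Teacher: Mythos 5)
Your proof is correct and follows essentially the same vertex-deletion-and-iterate strategy as the paper, which removes a vertex $v$ with $N[v]\subseteq N(A-\{v\})$ and argues that none of $E$, $I$, $S$ decreases. Your explicit treatment of the case where $v$ itself is an internal private neighbor of $A$ and converts to an external private neighbor of $A'$ is a genuine refinement: the paper delegates the $E$ and $I$ bookkeeping to its earlier open-open irredundance argument, which asserts that every internal private neighbor of $A$ remains an internal private neighbor of $A'$ and thus silently glosses over exactly this conversion, even though the total count is unaffected either way.
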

\begin{proof}
Suppose that $A$ is an $EISPN(G)$-set that is not closed-open irredundant.  This means that there exists $v \in A$ such that 
$N[v] \subseteq N(A-\{v\})$.  It follows that $v$ is not isolated in the subgraph $G[A]$ and $N(v) \subseteq N(A-\{v\})$.  Let $A'=A-\{v\}$.
As in the proof of Proposition~\ref{prop:open-openirrset} we see that $E(A) \leq E(A')$ and $I(A) \leq I(A')$.  Also, since $v$ is not isolated in $G[A]$, we have $S(A)\leq S(A')$.  Therefore,
\[ EISPN(G)=E(A)+I(A)+S(A) \le E(A')+I(A')+S(A')\le EISPN(G)\,.\]  It follows that $A'$ is an $EISPN(G)$-set.
Therefore, by repeating this process, we conclude that there exists an $EISPN(G)$-set $B$ that is closed-open irredundant and a subset of $A$.  \qed
\end{proof}

\section{Maximizing private neighbors in graph classes}  \label{sec:classes}

In this section we consider some of the well-known classes of graphs and determine the maximum
number of private neighbors they possess.  The proofs of the following closed form formulas for paths, cycles and complete bipartite 
graphs are straightforward, although the proofs are often case studies.  Since $SPN(G)=\alpha(G)$, the values for maximizing self private neighbors are omitted.

\begin{prop} \label{prop:paths}
If $n\ge 2$ is a positive integer, then
\begin{enumerate}
\item $IPN(P_n)=2\lceil\frac{n-1}{3}\rceil$.  
\item $EPN(P_n)= \begin{cases} 2\lfloor\frac{n}{3}\rfloor, &\text{if } n\not\equiv 2 \pmod{3};\\ 
                                      2\lfloor\frac{n}{3}\rfloor+1, &\text{if } n\equiv 2 \pmod{3}. 
                 \end{cases}$
\item $EIPN(P_n)= \begin{cases} 4\lfloor\frac{n}{4}\rfloor, &\text{if } n \pmod{4}\in \{0,1\};\\ 
                                n, &\text{if } n \pmod{4}\in \{2,3\}.
                 \end{cases}$
\item $ESPN(P_n)=EISPN(P_n)=n$.
\item $ISPN(P_n)= \begin{cases} 2\lceil\frac{n-1}{3}\rceil+1, &\text{if } n\equiv 1 \pmod{3};\\ 
                                      2\lceil\frac{n-1}{3}\rceil, &\text{if } n\not\equiv 1 \pmod{3}. 
                 \end{cases}$
\end{enumerate}
\end{prop}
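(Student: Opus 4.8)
The plan is to establish each of the five formulas by exhibiting an explicit optimal vertex set $U$ (giving the lower bound) together with a matching upper bound argument based on counting how private neighbors are distributed along the path. Throughout, label the vertices of $P_n$ as $v_1, v_2, \ldots, v_n$ in path order, and recall that a private neighbor with respect to $U$ is a vertex with exactly one neighbor in $U$, classified as self, internal, or external depending on whether it lies in $U$ and whether it has a neighbor in $U$.

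For part $(4)$, $ESPN(P_n)=EISPN(P_n)=n$: the upper bound is trivial since no parameter can exceed $n$. For the lower bound, take $U$ to be every third vertex together with whatever is forced at the ends; more cleanly, one can take $U=\{v_i : i \equiv 1 \pmod 3\}$ and check that every vertex of $P_n$ is either a self private neighbor (the isolated vertices of $G[U]$, which here is all of $U$) or an external private neighbor (each $v_j \notin U$ has exactly one neighbor in $U$ when spacing is $3$). A small case check at the right end handles $n \not\equiv 1 \pmod 3$. This disposes of $(4)$ quickly and also gives a template for the constructions in the other parts.

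For parts $(1)$, $(2)$, $(5)$, the natural approach is a \emph{block decomposition}: partition the path greedily into consecutive blocks and show that each block of a given length contributes a bounded number of private neighbors of the relevant type, so that the total is at most the claimed value; then show a periodic pattern achieves the bound. For $IPN$, the key local fact is that internal private neighbors come in adjacent pairs (an edge of $G[U]$ both of whose endpoints have degree $1$ in $G[U]$), and on a path each such pair ``uses up'' essentially three consecutive vertices (the pair plus a gap), which yields the $2\lceil (n-1)/3\rceil$ bound; the pattern $U = \{v_1,v_2,v_4,v_5,v_7,v_8,\ldots\}$ (pairs separated by single gaps) realizes it, with an end adjustment. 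For $ISPN$ one uses the same pairs-plus-isolated-vertices structure but is now also allowed isolated vertices of $G[U]$, which lets one add one extra vertex exactly when $n \equiv 1 \pmod 3$ (the leftover single vertex at the end of the block decomposition can be thrown in as a self private neighbor); the inequality $ISPN(P_n) \le IPN(P_n)+1$-type reasoning, combined with Proposition~\ref{prop:1-dependent}, makes this precise. For $EPN$, one counts external private neighbors per block: a vertex $v_i \in U$ can have at most its two path-neighbors as candidates, and sharing/collision constraints force the density; the extremal configurations are again periodic with period $3$, and the $+1$ bonus when $n \equiv 2 \pmod 3$ comes from the two endpoints each being giftable as an external private neighbor in a length-$2$ leftover block.

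For part $(3)$, $EIPN$: here the relevant combined objects are edges of $G[U]$ whose endpoints have degree $1$ (contributing two internal private neighbors) and external private neighbors, and the extremal pattern has period $4$ — e.g. $U=\{v_1,v_2,v_5,v_6,v_9,v_{10},\ldots\}$ where each pair $\{v_{4k+1},v_{4k+2}\}$ contributes two internal private neighbors (the pair itself) and the two gap vertices $v_{4k+3},v_{4k+4}$ each become external private neighbors of an adjacent pair-endpoint — giving $4$ per period, hence $4\lfloor n/4\rfloor$, with the whole path covered (value $n$) when the leftover is of size $2$ or $3$. The upper bound requires showing that over any four consecutive vertices at most four private neighbors of external-or-internal type can be ``charged,'' which is the most delicate of the block estimates. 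I expect the main obstacle across all parts to be making the block/charging upper-bound arguments airtight at the boundaries between blocks and at the two ends of the path — deciding exactly which vertex each private neighbor is charged to so that no vertex is double-counted and the per-block cap genuinely holds — since the interior periodic behavior and the explicit optimal constructions are routine once the right period and pattern are identified.
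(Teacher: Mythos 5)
The paper offers no written proof of this proposition --- it is stated with the remark that the arguments are ``straightforward, although the proofs are often case studies'' --- so your construction-plus-charging plan is exactly the kind of argument the authors have in mind, and your explicit optimal sets (the period-$3$ pair pattern for $IPN$/$ISPN$, the perfect-code pattern for $ESPN$, the period-$4$ pattern $\{v_1,v_2,v_5,v_6,\dots\}$ for $EIPN$) are all correct, including the end adjustments you mention. One local claim in your sketch is false, however: internal private neighbors with respect to $U$ do \emph{not} come in adjacent pairs. An internal private neighbor is simply a vertex of degree $1$ in $G[U]$; on a path $G[U]$ is a disjoint union of subpaths, and a component that is a subpath of order at least $3$ contributes exactly two internal private neighbors (its two endpoints), which are not adjacent, while its interior vertices contribute nothing. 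The correct local fact --- each component of $G[U]$ of order at least $2$ yields exactly two internal private neighbors and occupies at least two vertices plus a gap of one to the next component, so $k$ such components force $3k-1\le n$ --- gives the same bound $2\lfloor(n+1)/3\rfloor=2\lceil(n-1)/3\rceil$, so the error is harmless, but as written the ``key local fact'' is wrong.

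The other point to flag is that the boundary analysis you defer is not merely bookkeeping; for two of the parts it is where the whole proof lives. For $EPN$, the clean per-block cap (three consecutive vertices cannot all be external private neighbors, since the middle one would then have no neighbor in $U$) yields only $EPN(P_n)\le 2\lceil n/3\rceil$, which overshoots the claimed value by $2$ when $n\equiv 1\pmod 3$ and by $1$ when $n\equiv 2\pmod 3$; closing that gap requires a genuine case analysis propagating from the ends of the path, as in the $n=7$ situation where one must rule out $5$ external private neighbors. For $EIPN$ with $n\equiv 1\pmod 4$ the upper bound $n-1$ amounts to showing $P_n$ has no total perfect code: if every vertex had exactly one neighbor in $U$, then $v_2\in U$ is forced and the membership pattern propagates with period $4$ from the left end, failing at the right end precisely when $n\equiv 1\pmod 4$. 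Neither step is hard, but your proposal would not be a complete proof until they are written out; everything else (parts $(1)$, $(4)$, $(5)$, with $(5)$ handled via Proposition~\ref{prop:1-dependent} and the clean two-per-three bound for $1$-dependent sets) goes through as you describe.
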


\begin{prop}  \label{prop:cycles}
If $n \ge 3$ is a positive integer, then
\begin{enumerate}
\item $IPN(C_n)=EPN(C_n)=2 \lfloor\frac{n}{3} \rfloor$.
\item $EIPN(C_n)= \begin{cases} 4\lceil\frac{n-2}{4}\rceil, &\text{if } n\not\equiv 3 \pmod{4};\\ 
                                n-1, &\text{if } n \equiv 3 \pmod{4}.
                 \end{cases}$
\item $ESPN(C_n)= \begin{cases} n, & \text{if } n\equiv 0 \pmod{3};\\ 
                                n-1, &\text{if } n\not \equiv 0 \pmod{3}.
                 \end{cases}$ 
\item $ISPN(C_n)=  \begin{cases} 2\lceil\frac{n}{3}\rceil +1, & \text{if } n\equiv 2 \pmod{3};\\ 
                                2 \lfloor\frac{n}{3}\rfloor, &\text{if } n\not \equiv 2 \pmod{3}.
                 \end{cases}$ 
\item $EISPN(C_n)=   \begin{cases} 4, & \text{if } n=5;\\ 
                                n, &\text{if } n\neq 5.
                 \end{cases}$            
\end{enumerate}
\end{prop}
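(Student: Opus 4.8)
The engine for all five parts is a single structural device, which I will call the \emph{arc--gap decomposition}. Fix a cyclic labelling $v_0,\dots,v_{n-1}$ of $C_n$, and let $U$ be a vertex set with $\emptyset\neq U\neq V$. Going once around the cycle, $U$ and $V-U$ break into alternating maximal runs; call the runs contained in $U$ the \emph{arcs} $A_1,\dots,A_k$ with $|A_i|=a_i\ge 1$, and the runs contained in $V-U$ the \emph{gaps} $B_1,\dots,B_k$ with $|B_i|=b_i\ge 1$, so that $\sum_i a_i+\sum_i b_i=n$. The two omitted cases are harmless: for $U=V$ the graph $G[U]=C_n$ is $2$-regular and for $U=\emptyset$ there is nothing, so neither is extremal once $n\ge 3$; hence everything reduces to an optimisation over admissible run-length sequences. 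I would next record the contribution of one run. An arc $A_i$ induces a path on $a_i$ vertices, so it contributes one self private neighbor and no internal one when $a_i=1$, and two internal private neighbors (its endpoints) and no self one when $a_i\ge 2$, and never an external one; a gap $B_i$ contributes no internal or self private neighbor, and an external private neighbor at exactly those end vertices whose other neighbor lies inside the gap, namely two of them when $b_i\ge 2$ and none when $b_i=1$. Writing $[\,\cdot\,]$ for the indicator, this gives the clean identities
\[
S(U)=\sum_i[a_i=1],\qquad I(U)=2\sum_i[a_i\ge 2],\qquad E(U)=2\sum_i[b_i\ge 2],
\]
so each parameter in the statement is the maximum of a fixed linear combination of these indicators over all admissible $(a_i),(b_i)$.

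With these formulas in hand I would dispatch $IPN$, $EPN$ and $ISPN$ first. For $IPN$ (maximising $2\sum[a_i\ge 2]$) and $EPN$ (maximising $2\sum[b_i\ge 2]$), a short count shows that each ``active'' arc or gap together with its neighbouring run costs at least $3$ vertices, so there are at most $\lfloor n/3\rfloor$ active runs and the value is at most $2\lfloor n/3\rfloor$, attained by the periodic pattern $2,1,2,1,\dots$ (resp.\ $1,2,1,2,\dots$) with the $0$, $1$ or $2$ leftover vertices absorbed into one block; this is part~(1). For $ISPN$ the key point is that $S(U)+I(U)=\sum_i\bigl([a_i=1]+2[a_i\ge 2]\bigr)$ does not involve the gaps at all, so one should make every gap have size $1$; with $k$ arcs and $k$ unit gaps one can keep all arcs of size $\le 2$ precisely when $k\ge\lceil n/3\rceil$, in which case the total equals $n-k$, maximised at $k=\lceil n/3\rceil$, while forcing $k<n/3$ only produces oversized, less efficient arcs. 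The closed form in part~(4) then follows by rewriting $n-\lceil n/3\rceil$ according to the residue of $n$ modulo $3$.

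For $ESPN$, $EIPN$ and $EISPN$ the counted vertices are pairwise distinct (self and internal private neighbors lie in $U$ and are vertex-disjoint, external ones lie outside $U$), so each of these parameters is at most $n$, and the work is to characterise equality and then to find the best value just below. Equality in $EISPN$ forces every arc to have size $\le 2$ and every gap size exactly $2$; with $k$ arcs this requires $3k\le n\le 4k$, which has an integer solution for every $n\ge 3$ except $n=5$, giving $EISPN(C_n)=n$ for $n\neq 5$, and for $C_5$ a finite inspection of the possible run-length patterns yields $4$: part~(5). Equality in $ESPN$ forces all arcs of size $1$ and all gaps of size $2$, i.e.\ $n\equiv 0\pmod 3$; otherwise the pattern $1,2,1,2,\dots$ with a short tail attains $n-1$: part~(3). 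For $EIPN$, every run contributes an even amount, so the parameter is always even, and equality at $n$ forces all arcs and gaps of size exactly $2$, i.e.\ $n\equiv 0\pmod 4$; in the remaining residues the even value immediately below is realised by a $2,2,2,2,\dots$ pattern plus a short tail, which pins the value to $4\lceil(n-2)/4\rceil$ when $n\not\equiv 3\pmod 4$ and to $n-1$ (the largest even integer not exceeding $n$) when $n\equiv 3\pmod 4$: part~(2).

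The difficulty is not in any single bound but in the boundary bookkeeping: one must separately confirm the small cases $n=3,4$ and the exceptional residues, in particular that $EISPN(C_5)=4$ rather than $5$ (the unique failure of $3k\le n\le 4k$) and that for $EIPN$ the expression $4\lceil(n-2)/4\rceil$ overshoots when $n\equiv 3\pmod 4$ so that the correct value is the largest even integer at most $n$. Two observations make the rest short: the parity of $E(U)+I(U)$, and the ``distinct counted vertices, hence at most $n$'' ceiling for $ESPN$, $EIPN$ and $EISPN$; once these are available the matching lower bounds are exactly the explicit block patterns above. One could alternatively invoke Propositions~\ref{prop:openirrset}, \ref{prop:irrset}, \ref{prop:open-openirrset}, \ref{prop:1-dependent} and \ref{prop:closed-openirrset} to assume the extremal set already carries the corresponding irredundance-type property, but on cycles the direct run-length count is cleaner and self-contained.
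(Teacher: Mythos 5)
The paper states this proposition without proof (it only remarks that such proofs are straightforward case studies), so there is no argument of the authors' to compare against; your arc--gap decomposition is the natural way to organize those cases, and the three identities $S(U)=\sum_i[a_i=1]$, $I(U)=2\sum_i[a_i\ge 2]$, $E(U)=2\sum_i[b_i\ge 2]$ are correct. Granting them, parts (1), (2), (3) and (5) are sound: the ceiling $E(U)+I(U)+S(U)\le n$ from disjointness of the counted vertices, the parity of $E(U)+I(U)$, the characterisations of when the ceiling is attained (all runs of size $2$ for $EIPN=n$; arcs of size $1$ and gaps of size $2$ for $ESPN=n$; arcs of size $\le 2$ and gaps of size $2$ for $EISPN=n$, feasible iff some integer $k$ satisfies $3k\le n\le 4k$, which fails only at $n=5$), and the explicit extremal patterns all check out. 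One small point in part (2): for $n\equiv 2\pmod 4$ you need to say that $n$ itself is excluded by the ``all runs of size $2$'' characterisation, so the value drops to $n-2$ rather than merely to the largest even integer not exceeding $n$; the ingredient is in your argument but the sentence as written blurs it.

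Part (4) contains a genuine problem, though not where you would expect. Your optimisation is correct: $S(U)+I(U)=\sum_i\min(a_i,2)\le\min(n-k,2k)$, whose maximum over $k$ is $n-\lceil n/3\rceil$, which equals $2\lfloor n/3\rfloor$ when $n\equiv 0,1\pmod 3$ and $2\lfloor n/3\rfloor+1$ when $n\equiv 2\pmod 3$. But you then assert that this ``follows by rewriting'' into the displayed formula, and it does not: for $n\equiv 2\pmod 3$ the statement reads $2\lceil n/3\rceil+1$, which exceeds your value by $2$. The displayed formula is in fact false: for $C_5$ it gives $5$, yet internal and self private neighbours all lie in $U$, the only $5$-element choice is $U=V(C_5)$, which yields none, and a short check gives $ISPN(C_5)=3=2\lfloor 5/3\rfloor+1$ (similarly $ISPN(C_8)=5$, not $7$). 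So the ceiling in the statement is evidently a typo for a floor; your derivation establishes the corrected formula, and the sentence claiming agreement with the printed one is wrong as written. You should flag the discrepancy explicitly rather than paper over it.
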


\begin{prop} \label{prop:completebipartite}
If $p$ and $q$ are positive integers such that $p \le q$, then  
\begin{itemize}
\item $IPN(K_{p,q})=ISPN(K_{p,q})=q$.
\item $EPN(K_{p,q})=   \begin{cases} q, & \text{if } p=1;\\ 
                                p+q-2, &\text{if } p\ge 2.
                 \end{cases}$ 
\item $EIPN(K_{p,q})=EISPN(K_{p,q})=p+q$.
\item $ESPN(K_{p,q})=   \begin{cases} q+1, & \text{if } p=1;\\ 
                                \max\{1+p, p+q-2\}, &\text{if } p\ge 2.
                       \end{cases}$ 
\end{itemize}
\end{prop}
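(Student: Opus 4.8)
The plan is to exploit the extreme symmetry of $K_{p,q}$ so that the whole proposition becomes a short finite optimization. Let $A$ and $B$ be the parts of the bipartition with $|A|=p\le q=|B|$, and for any $U\subseteq V(K_{p,q})$ set $a=|U\cap A|$ and $b=|U\cap B|$; up to an automorphism $U$ is determined by the pair $(a,b)$. Since a vertex of $A$ is adjacent to all of $B$ and a vertex of $B$ to all of $A$, a vertex in $A$ has exactly $b$ neighbors in $U$ and a vertex in $B$ has exactly $a$. From this one reads off closed forms: $S(U)=a$ if $b=0$, $S(U)=b$ if $a=0$, and $S(U)=0$ once $a\ge 1$ and $b\ge 1$; the number of internal private neighbors is $a$ precisely when $b=1$, plus $b$ precisely when $a=1$; and the number of external private neighbors is $p-a$ precisely when $b=1$, plus $q-b$ precisely when $a=1$. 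In particular $I(U)=E(U)=0$ unless $\min(a,b)\le 1$, and $S(U)=0$ unless $\min(a,b)=0$, so every optimization below ranges over only a bounded, essentially one-parameter, family of sets.

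First I would settle the four ``doubled'' parameters. Using the formulas, $I(U)+S(U)\le q$ for every $U$, by a short case check on the values of $a$ and $b$ (all sets with $a\ge 2$ and $b\ge 2$ give $I(U)=S(U)=0$), while $U=\{x\}\cup B$ with $x\in A$ realizes $I(U)=q$; since $IPN(G)\le ISPN(G)$ holds in every graph, this forces $IPN(K_{p,q})=ISPN(K_{p,q})=q$. For the remaining pair, take $U$ to be one vertex of $A$ together with one vertex of $B$: then $E(U)=(p-1)+(q-1)$, $I(U)=2$, $S(U)=0$, so $E(U)+I(U)+S(U)=p+q$; as the external, internal, and self private neighbors of any set are three pairwise-disjoint subsets of the vertex set, no set beats $p+q$, and with $EIPN(G)\le EISPN(G)$ this gives $EIPN(K_{p,q})=EISPN(K_{p,q})=p+q$.

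That leaves $EPN$ and $ESPN$, which is where the genuine case analysis lives and where I expect the main obstacle. Since $E(U)$ and $S(U)$ both vanish unless $\min(a,b)\le 1$, it suffices to maximize over the four families $a=0$, $b=0$, $a=1$, $b=1$, in each of which the target is an explicit function of the one free coordinate. For $EPN$ the family optima are $p$ (at $a=0,b=1$), $q$ (at $a=1,b=0$), and $p+q-2$ (at $a=b=1$); the value of $EPN$ is the largest of these, which simplifies to $q$ when $p=1$ and to $p+q-2$ when $p\ge 2$. For $ESPN$ one adds the self-neighbor term: the distinguished sets are a single vertex of $A$ (giving $E+S=q+1$), a single vertex of $B$ (giving $p+1$), and one vertex from each part (giving $p+q-2$), and $ESPN$ is the maximum of these three values; in particular $ESPN(K_{1,q})=q+1$. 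The delicate part is confirming that this list of candidates is exhaustive --- ruling out every configuration with one coordinate equal to $1$ and the other at least $2$, and every configuration contained in a single part --- and then correctly sorting the maximum of $\{q+1,\,p+1,\,p+q-2\}$ into cases; the bookkeeping for $ESPN$, together with a direct check of the smallest graphs such as $K_{1,1}$, is the step most likely to hide a slip. All of the underlying estimates are elementary.
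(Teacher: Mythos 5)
Your parametrization by $(a,b)=(|U\cap A|,|U\cap B|)$ and the resulting closed forms for $S(U)$, $I(U)$ and $E(U)$ are correct, and the method is exactly the ``straightforward case study'' the paper alludes to (the paper supplies no proof of this proposition, so there is nothing more specific to compare against). The arguments for $IPN$, $ISPN$, $EPN$, $EIPN$ and $EISPN$ go through as you describe, modulo one degenerate case: your claim that $I(U)+S(U)\le q$ for every $U$ fails for $K_{1,1}$, where $U=V$ gives $I(U)=2>1=q$; indeed $IPN(K_{1,1})=ISPN(K_{1,1})=2$, consistent with the path formula $IPN(P_2)=2$, so the first item of the statement itself requires $q\ge 2$.

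The genuine problem is with $ESPN$. Your candidate analysis is right and yields $ESPN(K_{p,q})=\max\{q+1,\,p+q-2\}$: a single vertex $x\in A$ is a self private neighbor and has all $q$ vertices of $B$ as external private neighbors. But you never check this against the stated formula, which for $p\ge 2$ reads $\max\{1+p,\,p+q-2\}$; the two disagree when $p=2$ and $q\ge 3$. For instance, in $K_{2,3}$ a single vertex of the $2$-side gives $E(U)+S(U)=3+1=4$, whereas the statement gives $\max\{3,3\}=3$. So your sketch, carried out faithfully, does not prove the statement as written --- it proves a different (and, as far as I can verify, correct) formula, which strongly suggests the statement contains a typo ($1+p$ where $1+q$ is meant; note $\max\{1+q,p+q-2\}$ also subsumes the $p=1$ case). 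You need either to flag this discrepancy explicitly or to locate an error in your own bookkeeping; at present the write-up waves at ``correctly sorting the maximum into cases'' precisely at the one point where your answer and the claimed answer differ.
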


The class of grid graphs are well-studied for some of the irredundance-type invariants.  In the following tables we 
include computed values for the maximum number of private neighbors of some small grids.   The resulting numbers suggest several conjectures or open problems, some of which we list in Section~\ref{sec:open}.  We then prove the exact values
of the maximum number of external or self private neighbors in $n \times 2$ grids.

\begin{table}[ht] 
\begin{center}
\begin{tabular}{|c||c|c|c|c|c|c|c|c|}
  \hline
   & 2 & 3 & 4 & 5 & 6 & 7 & 8 & 9 \\ \hline \hline
  2 & 2 & 4 & 4 & 6 & 6 & 8 & 8 & 10 \\ \hline
  3 &  & 4 & 7 & 8 & 10 & 12 & 13 & 15 \\  \hline
  4 &  &  & 8 & 10 & 12 & 14 & 16 & 18 \\  \hline
  \hline
\end{tabular}
\caption{$IPN(P_n \cp P_m)$} \label{table:IPN}
\end{center}
\end{table}
\newpage  

\begin{table}[h]
\begin{center}
\begin{tabular}{|c||c|c|c|c|c|c|c|c|}
  \hline
   & 2 & 3 & 4 & 5 & 6 & 7 & 8 & 9 \\ \hline \hline 
  2 & 2 & 4 & 5 & 7 & 8 & 10 & 11 & 13 \\  \hline 
  3 &  & 6 & 8 & 10 & 12 & 15 & 16 & 19 \\  \hline 
  4 &  &  & 12 & 14 & 17 & 20 &  &  \\  \hline 
  \hline
\end{tabular}
\caption{$EPN(P_n \cp P_m)$} \label{table:EPN}
\end{center}
\end{table}

\begin{table}[h]
\begin{center}
\begin{tabular}{|c||c|c|c|c|c|c|c|c|}
  \hline \hline
   & 2 & 3 & 4 & 5 & 6 & 7 & 8 & 9 \\ \hline \hline
  2 & 4 & 6 & 8 & 10 & 12 & 14 & 16 & 18 \\ \hline
  3 &  & 8 & 10 & 14 & 16 & 20 & 22 & 25 \\ \hline
  4 &  &  & 16 & 18 & 24 & 28 &  &  \\ \hline
  \hline
\end{tabular}
\caption{$EIPN(P_n \cp P_m)$} \label{table:EIPN}
\end{center}
\end{table}

\begin{table}[h]
\begin{center}
\begin{tabular}{|c||c|c|c|c|c|c|c|c|}
  \hline
   & 2 & 3 & 4 & 5 & 6 & 7 & 8 & 9  \\ \hline \hline
  2 & 3 & 6 & 7 & 10 & 11 & 14 & 15 & 18 \\ \hline
  3 &  & 8 & 11 & 14 & 16 & 19 & 22 & 25 \\ \hline
  4 &  &  & 16 & 18 & 23 & 27 &  &  \\ \hline
  \hline
\end{tabular}
\caption{$ESPN(P_n \cp P_m)$} \label{table:ESPN}
\end{center}
\end{table}

\begin{table}[h]
\begin{center}
\begin{tabular}{|c||c|c|c|c|c|c|c|c|}
  \hline
   & 2 & 3 & 4 & 5 & 6 & 7 & 8 & 9  \\  \hline \hline
  2 & 2 & 4 & 4 & 6 & 6 & 8 & 8 & 10 \\ \hline
  3 &  & 5 & 7 & 9 & 10 & 12 & 14 & 15 \\ \hline
  4 &  &  & 8 & 11 & 12 & 15 &  &  \\ \hline
  \hline
\end{tabular}
\caption{$ISPN(P_n \cp P_m)$} \label{table:ISPN}
\end{center}
\end{table}

\begin{table}[h]
\begin{center}
\begin{tabular}{|c||c|c|c|c|c|c|c|c|}
  \hline
   & 2 & 3 & 4 & 5 & 6 & 7 & 8 & 9  \\ \hline \hline
  2 & 4 & 6 & 8 & 10 & 12 & 14 & 16 & 18 \\ \hline
  3 &  & 8 & 12 & 14 & 18 & 21 & 24 & 27 \\ \hline
  4 &  &  & 16 & 20 & 24 & 28 &  &  \\ \hline
  \hline
\end{tabular}
\caption{$EISPN(P_n \cp P_m)$} \label{table:EISPN}
\end{center}
\end{table}

\begin{prop} \label{prop:prisms}
If $n$ is a positive integer such that $n \ge 2$, then 
\[ESPN(P_n \cp P_2)=\begin{cases} 2n, &\text{if $n$ is odd;}\\ 2n-1, &\text{if $n$ is even.} \end{cases}\,.\]
\end{prop}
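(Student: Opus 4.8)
The plan is to recast the problem in terms of efficient dominating sets. For any graph $G$ and any $U\subseteq V(G)$, the vertices counted by $S(U)$ lie in $U$ and are isolated in $G[U]$, the vertices counted by $E(U)$ lie outside $U$ and have exactly one neighbour in $U$, and these two sets are disjoint; hence $E(U)+S(U)\le n(G)$, with equality exactly when $|N[x]\cap U|=1$ for every $x\in V(G)$, i.e.\ when $U$ is an efficient dominating set. Thus $ESPN(P_n\cp P_2)=2n$ if $P_n\cp P_2$ has an efficient dominating set and $ESPN(P_n\cp P_2)\le 2n-1$ otherwise. I would write the vertex set of $P_n\cp P_2$ as $\{(i,j):i\in[n],\ j\in\{1,2\}\}$, with $(i,j)$ adjacent to $(i\pm1,j)$ and to $(i,3-j)$, and refer to columns $1$ and $n$ as the \emph{end columns}.

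For odd $n$, write $n=2t+1$ and set $U=\{(2i-1,r_i):1\le i\le t+1\}$, where $r_i=1$ for $i$ odd and $r_i=2$ for $i$ even -- one vertex in each odd column, alternating rows. I would check that $U$ is an efficient dominating set: its vertices lie in columns differing pairwise by at least $2$, so $U$ is independent; a vertex in an odd column $c$ has in its closed neighbourhood only the single $U$-vertex of column $c$; and a vertex in an even column $2m$ has exactly one neighbour in $U$, namely whichever of the $U$-vertices of columns $2m-1$ and $2m+1$ lies in its row, and exactly one does because $r_m\neq r_{m+1}$. Hence $ESPN(P_n\cp P_2)=2n$.

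For even $n$ I would show $P_n\cp P_2$ has no efficient dominating set. The four corner vertices $(1,1),(1,2),(n,1),(n,2)$ have degree $2$ and every other vertex has degree $3$, so if $U$ were an efficient dominating set containing $a$ corners and $b$ non-corners, partitioning $V$ into closed neighbourhoods gives $3a+4b=2n$; since $n$ is even, $4\mid 2n$, forcing $a\equiv 0\pmod 4$ and hence $a\in\{0,4\}$. If $a=4$ then $(1,1),(1,2)\in U$ are adjacent, so $|N[(1,1)]\cap U|\ge 2$, a contradiction. If $a=0$ then, since the only non-corner members of $N[(1,1)]$ and of $N[(1,2)]$ are $(2,1)$ and $(2,2)$ respectively, efficiency forces $(2,1),(2,2)\in U$, which are again adjacent, a contradiction. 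Therefore $ESPN(P_n\cp P_2)\le 2n-1$. I expect this to be the main obstacle -- ruling out every efficient dominating set -- and the corner-parity count above seems the cleanest way to do it, avoiding a full column-by-column case analysis.

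Finally, for even $n$ I would attain $2n-1$ by reusing the odd construction. Let $U$ be the efficient dominating set of $P_{n-1}\cp P_2$ built above (valid since $n-1$ is odd), whose last vertex is $(n-1,j)$ for some $j$, and keep $U$ as a vertex set of $P_n\cp P_2$. Adjoining column $n$ adds to the closed neighbourhoods of columns $1,\dots,n-1$ only vertices of column $n$, none of which lie in $U$; so every vertex in columns $1,\dots,n-1$ is still a self private neighbour (if in $U$) or an external private neighbour (if not). In column $n$, the vertex $(n,j)$ has the single neighbour $(n-1,j)\in U$, so it is an external private neighbour, while $(n,3-j)$ has no neighbour in $U$. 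Thus all but the one vertex $(n,3-j)$ are counted, giving $S(U)+E(U)=2n-1$; together with the upper bound this yields $ESPN(P_n\cp P_2)=2n-1$.
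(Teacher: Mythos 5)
Your proposal is correct and follows essentially the same route as the paper: recast $ESPN(G)=S(U)+E(U)\le n(G)$ with equality exactly for efficient dominating sets, use the alternating-row set on odd columns for odd $n$, reuse that set to get $2n-1$ for even $n$, and rule out an efficient dominating set of $P_n\cp P_2$ for even $n$ by counting closed neighborhoods against the four degree-$2$ corners. The one point of divergence is how the case of zero corners in the hypothetical efficient dominating set is excluded: the paper leans on the known value $\gamma(G_{2r,2})=r+1$ to force at least four $p_i=3$ terms, whereas you handle it by a local forcing argument (both $(2,1)$ and $(2,2)$ would have to be in $U$, but they are adjacent), which is self-contained and arguably cleaner than the paper's terse ``analysis shows'' step.
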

\begin{proof}
Recall that $G_{n,2}=P_n \cp P_2$.
Let $V(P_n)=[n]$ with $E(P_n)=\{ij \,:\, i \in [n-1] \text{ and } j=i+1\}$, let $V(P_2)=[2]$, and let
\[U=\{(k,1)\,:\, k \equiv 1 \pmod{4} \text{ and } k \leq n\} \cup \{(k,2)\,:\, k \equiv 3 \pmod{4} \text{ and } k \leq n\}\,.\] 
If $n$ is odd, a straightforward check shows that the set $U$ is an efficient dominating set of $G_{n,2}$. Therefore, 
$ESPN(P_n \cp P_2)=2n$.  Now, assume 
that $n$ is even, say $n=2r$.  It is known that $\gamma(G_{2r,2})=r+1$; see Theorem 17.1 in \cite{hhh-2023}.  One can 
easily check that $S(U)=r$ and $E(U)=3r-1$, which implies that $ESPN(G_{n,2})\ge 2n-1$. 
We claim that $P_n \cp P_2$ does not have an efficient dominating set.  Suppose to the
contrary that $D=\{v_1,\ldots, v_k\}$ is such a dominating set.  It follows that $4r=p_1+p_2 +\cdots +p_k$,
where $p_i=|N[v_i]|\in \{3,4\}$ for each $i \in [k]$ . Since $G_{n,2}$ has only four vertices of degree 2, analysis shows that
$4r=p_1+p_2 +\cdots +p_k$ is possible only if $|\{i\in [k] \,:\, p_i=3 \}|=4$.  This implies 
that $\{(1,1),(1,2),(n,1),(n,2)\} \subseteq D$, which is a contradiction.  \qed
\end{proof}

\section{Generalized efficient graphs}

Suppose a graph $G$ has an efficient dominating set $D$.  In other contexts an efficient dominating set is called a \emph{perfect code}.  The graph $G$ is then said to be \emph{efficient}.  By definition, each vertex in $D$ is a self private neighbor with respect to $D$ (that is, $S(D)=|D|$), and every vertex in $V-D$ is an external private neighbor with respect to $D$ (that is, $E(D)=n(G)-|D|$).  Therefore, $ESPN(G)=S(D)+E(D)=n(G)$.   Let us say that such a graph $G$ is \emph{$ES$-efficient}.  The converse is also true.    
\begin{prop}
A graph $G$ is efficient if and only if $ESPN(G)=n(G)$.
\end{prop}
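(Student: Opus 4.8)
The plan is to prove both directions of the equivalence. The forward direction is already essentially done in the paragraph preceding the statement: if $G$ has an efficient dominating set $D$, then every vertex of $D$ is a self private neighbor and every vertex of $V-D$ is an external private neighbor, so $S(D)+E(D)=n(G)$, giving $ESPN(G)\ge n(G)$; since $ESPN(G)$ counts a subset of $V$ it cannot exceed $n(G)$, so $ESPN(G)=n(G)$. I would state this briefly and then concentrate on the converse.

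For the converse, suppose $ESPN(G)=n(G)$ and let $U$ be an $ESPN(G)$-set; by Proposition~\ref{prop:irrset} we may assume $U$ is irredundant. Since $S(U)+E(U)=n(G)$ and the $S(U)$ self private neighbors lie in $U$ while the $E(U)$ external private neighbors lie in $V-U$, and these two vertex sets are disjoint, counting forces every vertex of $U$ to be a self private neighbor (so $S(U)=|U|$ and $G[U]$ has no edges, i.e.\ $U$ is independent) and every vertex of $V-U$ to be an external private neighbor (so each $w\in V-U$ has exactly one neighbor in $U$). The first condition says no two vertices of $U$ are adjacent; the second says $|N[w]\cap U|=1$ for all $w\notin U$, and combined with independence, $|N[u]\cap U|=\{u\}$ for $u\in U$. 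Hence $|N[x]\cap U|=1$ for every $x\in V$, which is precisely the statement that $U$ is an efficient dominating set. Therefore $G$ is efficient.

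The argument is short and the only subtlety is the counting step: one must be careful that the three counts $S(U)$, $I(U)$, $E(U)$ tally \emph{distinct} vertices. A vertex is counted in $S(U)$ or $I(U)$ only if it lies in $U$, and in $E(U)$ only if it lies in $V-U$; moreover a vertex of $U$ cannot be simultaneously a self private neighbor (no neighbor in $U$) and an internal private neighbor (exactly one neighbor in $U$). So $S(U)+E(U)\le |U|+|V-U|=n(G)$ in general, with equality in our case forcing the extremal structure described above. I expect this disjointness bookkeeping to be the main (minor) obstacle; everything else is immediate from the definitions. Note that invoking Proposition~\ref{prop:irrset} is not strictly necessary — one can run the counting argument directly on any $ESPN(G)$-set $U$ — but it makes the write-up cleaner, so I would mention it as an optional simplification.

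\begin{proof}
If $G$ is efficient with efficient dominating set $D$, then, as noted above, $S(D)=|D|$ and $E(D)=n(G)-|D|$, so $ESPN(G)\ge S(D)+E(D)=n(G)$; since $ESPN(G)\le n(G)$ trivially, equality holds.

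Conversely, suppose $ESPN(G)=n(G)$ and let $U$ be an $ESPN(G)$-set. Every self private neighbor and every internal private neighbor with respect to $U$ lies in $U$, while every external private neighbor lies in $V-U$; moreover no vertex of $U$ is both a self and an internal private neighbor. Hence $S(U)\le |U|$, $E(U)\le |V-U|$, and
\[ n(G)=S(U)+E(U)\le |U|+|V-U|=n(G)\,. \]
Equality throughout forces $S(U)=|U|$ and $E(U)=|V-U|$. Thus every $u\in U$ satisfies $N(u)\cap U=\emptyset$, i.e.\ $N[u]\cap U=\{u\}$, and every $w\in V-U$ satisfies $|N(w)\cap U|=1$, i.e.\ $|N[w]\cap U|=1$. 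Therefore $|N[x]\cap U|=1$ for every $x\in V$, so $U$ is an efficient dominating set of $G$ and $G$ is efficient. \qed
\end{proof}
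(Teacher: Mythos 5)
Your proof is correct. The forward direction is exactly the argument the paper gives in the paragraph preceding the proposition, and your counting argument for the converse (which the paper merely asserts with ``The converse is also true'') is sound: since self private neighbors lie in $U$, external private neighbors lie in $V-U$, and $S(U)+E(U)=n(G)$ forces $S(U)=|U|$ and $E(U)=|V-U|$, the set $U$ is independent and every vertex outside $U$ has a unique neighbor in $U$, i.e.\ $U$ is an efficient dominating set. Your remark that invoking Proposition~\ref{prop:irrset} is unnecessary is also right; the direct counting on any $ESPN(G)$-set suffices.
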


Suppose $G$ is an efficient graph and let $D = \{v_1, v_2, \ldots, v_k\} \subset V(G)$  be an efficient dominating set in $G$.  It follows that $\pi = \{N[v_1], N[v_2], \ldots, N[v_k]\}$ is a partition of $V(G)$.   Not every graph $G$ is $ES$-efficient. Two simple examples are the cycles $C_{4}$ and $C_{5}$, although it is easy to see that all cycles and paths having order congruent to zero modulo $3$
are $ES$-efficient.  

In~\cite{Jason} Hedetniemi, et al. defined two other types of efficient dominating sets. Using our current terminology they said a dominating set $D$ of $G$ is a \emph{$1,2$-efficient} dominating set if every vertex in $V-D$ is an external private neighbor with respect to $D$ (that is, $E(D)=n(G)-|D|$) and every vertex in $D$ is a self private or an internal private neighbor (that is, $|D|=S(D)+I(D)$) with respect to $D$. That is, $EISPN(G)=E(D)+S(D)+I(D)=n(G)$.  We will say that such a graph $G$ is \emph{$EIS$-efficient}. Klostermeyer and Goldwasser~\cite{kg-2006} showed that $G_{n,2}$ is $EIS$-efficient for every $n$.  All paths and all cycles except $C_5$ were shown to be $EIS$-efficient in~\cite{Jason}. Furthermore, for $3 \le m \le 5$, all grid graphs $G_{n,m}$ (with the exception of $G_{3,3}, G_{5,3}, G_{3,5}$ and $G_{9,5}$) were  shown to be $EIS$-efficient in~\cite{Jason}.  They also proved that each $G_{r,s}$ for $r+s$ odd and each $G_{n,6}$ for $n\pmod{7} \in \{0,1,2,4,6\}$ are $EIS$-efficient and left the determination for the two cases $n\pmod{7} \in \{3,5\}$ as open problems.

In addition, they defined a dominating set $D$ to be a \emph{total efficient} dominating set if  every vertex in $V-D$ is an external private neighbor with respect to $D$ (that is, $E(D)=n(G)-|D|$) and every vertex in $D$ has an internal private neighbor with respect to $D$ (that is, $|D|=I(D)$). Therefore, $EIPN(G)=E(D)+I(D)=n(G)$. We will say that such a graph is \emph{$EI$-efficient}. It is easy to see that a path is $EI$-efficient if and only if its order is not congruent to $1$ modulo $4$.  Klostermeyer and Goldwasser characterized the $EI$-efficient grid graphs.  See~\cite{kg-2006}. 

Every connected $EI$-efficient graph $G$ can be constructed as follows.  Let $\cF_1$ be the family of all double stars and let $\cF_2$ the family of all stars with at least two leaves.  For any positive integer $k$, select $H_1,\ldots,H_k$ from $\cF_1 \cup \cF_2$.  For each selection from $\cF_2$, consider the center and one leaf as being marked, and for each double star chosen from $\cF_1$ consider the two centers as being marked. The vertex set of $G$ is  $V(H_1) \cup \cdots \cup V(H_k)$.  The edges of $G$ are the edges of the selected graphs 
together with any collection of edges joining the unmarked vertices (that is, the leaves) so that the resulting graph is connected. 
Note that if $u_i$ and $v_i$ are the marked vertices of $H_i$, then $\{N[\{u_1,v_1\}], \ldots, N[\{u_k,v_k\}]\}$ is a partition of $V(G)$.
Although the above is a constructive characterization of the class of $EI$-efficient graphs, determining whether a given connected graph
is $EI$-efficient is most likely a difficult problem.

In an analogous way, the class of connected $EIS$-efficient graphs can be constucted.  Let $\cF_3$ be the family of all stars of order at least $2$.  For any positive integer $k$, select $H_1,\ldots,H_k$ from $\cF_1 \cup \cF_2 \cup \cF_3$.  For each selection from $\cF_2$, consider the center and one leaf as being marked; for each double star chosen from $\cF_1$ consider the two centers as being marked; for each
star chosen from $\cF_3$ mark the center.  Proceed as in the case of $EI$-efficient graphs and add any set of edges joining leaves in
the disjoint union $V(H_1) \cup \cdots \cup V(H_k)$ so long as the resulting graph is connected.  For example, $K_{r,s}$, where $2 \le r \le s$, is $EIS$-efficient and can be constructed as shown by starting with the single double star $S(r-1,s-1)$.

In a similar vein there are three additional kinds of efficient graphs, although their structure is rather trivial.   A graph $G$ is \emph{$I$-efficient} (respectively, \emph{$S$-efficient}, \emph{$IS$-efficient}) if there is a set $U \subseteq V(G)$ such that every vertex of $G$ is an internal (respectively, a self, an internal or self) private  neighbor  with respect to $U$.  It is clear that the only $I$-efficient graphs are disjoint unions of complete subgraphs of order $2$; the only $S$-efficient graphs are sets of isolated vertices; the only $IS$-efficient graphs are disjoint unions of isolated vertices and complete subgraphs of order $2$. 

\section{A lower bound for $IPN(T)$ for trees $T$}

In this section we establish a sharp lower bound for the maximum number of internal private neighbors in any nontrivial tree.  
Let $T$ be a tree of order at least $2$. Recall that $I(U)$ is the number of internal private neighbors in $T[U]$, the subgraph of $T$ induced by $U \subseteq V(T)$. That is, $I(U)$ is the number of vertices of degree $1$ in $T[U]$.

\begin{thm} \label{thm:lowerIPN}
For any tree $T$ of order $n \ge 2$, $IPN(T) \ge \frac{n}{2}$, and this bound is sharp.  
\end{thm}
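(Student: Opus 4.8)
The plan is to prove the bound by strong induction on $n$, proving along the way a slightly stronger statement so that the inductive step has no ``leakage'' at the point where pieces are reattached. The strengthened claim I would carry is: for every tree $T$ on $n\ge 2$ vertices and every vertex $r\in V(T)$ there is a set $U$ with at least $\lceil n/2\rceil$ degree-one vertices in $T[U]$ such that, in addition, $r$ is \emph{not} an internal private neighbor with respect to $U$ (that is, $|N_T(r)\cap U|\ne 1$); the only trees for which this extra requirement cannot be met are $K_2$ and $P_3$ with $r$ a leaf, and these — together with stars and all $n\le 5$ — are dealt with by direct inspection. (For $P_4$ one gets $IPN(P_4)=2=n/2$ from Proposition~\ref{prop:paths}, which is also the source of sharpness.)

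For the inductive step, fix a longest path $v_0v_1\cdots v_\ell$ (so $\ell\ge 2$ and every neighbor of $v_1$ other than $v_2$ is a leaf), orienting it so that the designated vertex $r$ is, whenever possible, not among $\{v_0,v_1,v_2\}$. I would then split into cases according to the local structure at $v_1$: \textbf{(a)} if $v_1$ has at least two leaf neighbors, delete the whole ``broom'' $\{v_1\}\cup(\text{leaves of }v_1)$ and recurse on the (connected) remainder with new designated vertex $v_2$; \textbf{(b)} if $\deg v_1=\deg v_2=2$, there is a pendant path $v_0v_1v_2v_3$: delete $\{v_0,v_1,v_2\}$ but put only $v_0,v_1$ into $U$ — leaving $v_2$ out is the whole point — and recurse on the rest; \textbf{(c)} if $\deg v_1=2$ and $\deg v_2\ge 3$, treat $v_2$ as a branch vertex: when $v_2$ has at most one leaf neighbor, exclude $v_2$ from $U$ and solve the components of $T-v_2$ independently (at most one wasted singleton), and when $v_2$ has two or more leaf neighbors, include $v_2$ together with all of them and with $\{v_0,v_1\}$ and recurse on the other branches using the strengthened hypothesis. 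In every case a short arithmetic check — each reduction removes $m$ vertices and manufactures at least $\lceil m/2\rceil$, usually $\lceil (m+1)/2\rceil$, new degree-one vertices — yields the required $\lceil n/2\rceil$, and one checks that the designated-vertex condition survives (it does automatically whenever $r$ lies in the deleted gadget and is not one of the two vertices kept in $U$).

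The main obstacle, and the reason for the strengthening, is exactly the interface between the deleted gadget and the set returned by the recursion: if the reattachment vertex ($v_2$, or the branch root in case (c)) happened to be a degree-one vertex of the recursive solution, putting the gadget back raises its degree to $2$ and we lose it; since gadgets on at most three vertices cannot afford to lose one, the naive induction breaks. The three devices above are designed precisely to kill this loss — either the reattachment vertex is deliberately kept out of $U$ so there is no interaction at all, or the strengthened hypothesis guarantees it was not an internal private neighbor to begin with. The part that needs genuine care is the bookkeeping: threading the designated vertex $r$ through all the cases, reconciling it with the freshly created designated vertex at each step, and clearing away by hand the finitely many small configurations (brooms with a short tail, double brooms, $K_2$, $P_3$ with a designated leaf, and the stars) where the reductions do not apply.
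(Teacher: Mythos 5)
Your high-level strategy --- induct by stripping a pendant gadget from the end of a longest path --- is the same as the paper's, but the device you introduce to manage the interface, the strengthened hypothesis with a designated vertex $r$, has two genuine gaps. First, the strengthened claim is false beyond your listed exceptions. Take $T=P_5$ with vertices $x_1x_2x_3x_4x_5$ and $r=x_1$. Any induced subgraph $T[U]$ of a path is a disjoint union of subpaths, so $I(U)$ equals twice the number of nontrivial components; hence $I(U)\ge 3$ forces two nontrivial components, and the only such $U$ in $P_5$ is $\{x_1,x_2,x_4,x_5\}$, in which both end leaves are internal private neighbors. So $(P_5,\mathrm{leaf})$ must join $K_2$ and $(P_3,\mathrm{leaf})$ on the exception list, and your recursion really does generate this instance (strip a two-leaf broom from the tree obtained by attaching that broom to a leaf of $P_5$), so it cannot be quarantined among the base cases without extra argument. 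Second, and more structurally, a single designated vertex does not compose through the induction. In your case (a) (and the analogous parts of (c)) the gadget vertex $v_1$ remains adjacent to $v_2$ in the remainder, so the recursive call must control $v_2$; but your orientation rule deliberately pushes the inherited designated vertex $r$ into the remainder as well, so whenever $r\ne v_2$ you need the recursive solution to avoid making \emph{two} prescribed vertices internal private neighbors, and your hypothesis supplies only one. (A smaller slip: your formalization ``$|N_T(r)\cap U|\ne 1$'' is strictly stronger than ``$r$ is not an internal private neighbor,'' since the latter holds automatically when $r\notin U$; under the literal reading even $P_4$ with a designated leaf becomes an exception.)

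For contrast, the paper avoids both problems by not strengthening the hypothesis at all. It deletes the edge $v_2v_3$ of a diametral path (in its labelling the pendant piece is the star at $v_2$), applies the plain inductive bound to the component $T''$ containing $v_3$ to obtain an arbitrary optimal set $U''$, and only then does an a posteriori case analysis on the status of the attachment vertex $v_3$ in $U''$ --- not in $U''$; in $U''$ with an internal private neighbor but not itself one; itself an internal private neighbor --- modifying $U''$ (deleting $v_3$) in the last case before gluing on the pendant star. Because nothing is demanded of the recursive call beyond the numerical bound, there is no designated vertex to thread through and no exceptional small instances to track. To salvage your plan you would need either a correct, complete exception list together with a check that the gadget surplus absorbs the deficit on exceptional instances, or a hypothesis strong enough to control two vertices at once; as written, the argument does not close.
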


\begin{proof} We proceed by induction on the order $n$ of the tree.
By inspection, it is easy to verify the truth of the statement for any tree of order $n \le 5$.
Assume that  $n \ge 5$ and that every tree $T'$ of order at most $n$ satisfies $IPN(T') \ge \frac{n(T')}{2}$.
Let $T$ be a tree of order $n+1$.  Consider a path in $T$ of length $diam(T)$.
Suppose first that $diam(T) = 2$.  It follows that $T = K_{1,n}$, and $IPN(K_{1,n})=n \ge (n+1)/2$.
Suppose next that $diam(T) = 3$.  In this case $T$ is a \emph{double star}, $T = S(r,s)$ that consists of two adjacent vertices $x$ and $y$, where $x$ is adjacent to $r$ leaves, $y$ is adjacent to $s$ leaves, and both $r$ and $s$ are positive integers.  It is easy to see that $IPN(T)=n-1 \ge (n+1)/2$.  

Finally, suppose that $diam(T)=p \ge 4$ and let $v_1,v_2, \ldots, v_p,v_{p+1}$ be a diametral path in $T$. Let $T'$ denote the subtree of $T - v_2v_3$ containing vertices $v_1$ and $v_2$ and let $T''$ be the subtree of  $T-v_2v_3$ that contains $v_3$.  Note that if $v_2$ has degree at least $2$ in $T'$, then $T'$ is a star.   By the induction hypothesis, we know that 
$IPN(T'')  \ge \frac{n(T'')}{2}$.  Let $U''$ be an $IPN(T'')$-set. Suppose first that $v_3 \notin U''$.  If $T'$ consists only of vertices $v_1$ and $v_2$ then $U = U'' \cup \{v_1,v_2\}$ shows that $IPN(T) \ge I(U) \ge (n+1)/2$.  On the other hand, if  $T' = K_{1,r}$ for some $r \ge 2$, then $U = U'' \cup V(T')$ shows that $IPN(T) \ge (n+1)/2$.  Suppose next that $v_3 \in U''$, $v_3$ has
an internal private neighbor in $T''[U'']$ but that $v_3$ is not itself an internal private neighbor in $T''[U'']$.
If $T'$ consists only of vertices $v_1$ and $v_2$ then $U = U'' \cup \{v_2\}$ shows that $IPN(T) \ge (n+1)/2$.  If $T' = K_{1,r}$ for some $r \ge 2$, then $U = U'' \cup V(T')$ shows that $IPN(T) \ge (n+1)/2$. Finally, suppose that $v_3$ is an internal private neighbor
in $T''[U'']$.  In this case, $U = (U'' - \{v_3\}) \cup V(T')$ shows that $IPN(T) \ge (n+1)/2$. 

To prove sharpness let $T_m=P_m \circ K_1$, the corona of $P_m$.  We claim that if $m$ is even, then $IPN(T_m)=m=\frac{1}{2}n(T_m)$.
For notational convenience let $V(T_m)=\cup_{i=1}^m\{u_i,v_i\}$ and $E(T_m)=\{u_iu_{i+1}\,:\, i\in [m-1]\}\cup \{u_iv_i\,:\, i \in [m]\}$.
Let $m$ be even, say $m=2k$, let $U$ be an $IPN(T_m)$-set and let $H$ be the subgraph of $T_m$ induced by $U$.  By definition, $IPN(T_m)=|\{x\in V(H)\,:\, x \text{ has degree $1$ in } H\}|$.  For each $j \in [k]$, let $B_j = U \cap \{u_{2j-1},v_{2j-1},u_{2j},v_{2j}\}$.  Since $B_j$ is a subgraph of a path of order $4$, it is clear that $B_j$ has at most $2$ vertices of degree $1$ in $H$.  Therefore, $IPN(T_m)\le 2k$.
By applying the first part of the proof of the theorem it follows that $IPN(T_m)=\frac{n(T_m)}{2}$.
\qed
\end{proof}

\section{Perfect dominating sets in graphs}

In 1993, Cockayne, Hartnell, Hedetniemi and Laskar \cite{Cockayne93} defined a set $S \subset V$ to be a {\it perfect dominating set} if for every vertex $u \in V - S$, $|N(u) \cap S| = 1$.  Notice in this definition that no conditions are placed on the vertices in $S$; the subgraph $G[S]$ induced by $S$ can be any graph.  They defined the \emph{perfect domination number} $\gamma_p(G)$ to equal the minimum cardinality of a perfect dominating set, and the \emph{upper perfect domination number} $\Gamma_p(G)$ to equal the maximum cardinality of a perfect dominating set.  They pointed out that for some graphs the only perfect dominating set is vacuously the entire set $V(G)$.

Also in 1993, Bernhard, Hedetniemi and Jacobs \cite{Bernhard93} defined, for any set $U \subset V$, $ED(U) = \{ v : v \in V - U$  and $|N(v) \cap U| = 1\}$, that is, $ED(U)$ equals the number of vertices in $V - U$ which are \emph{efficiently} dominated by vertices in $U$.  This is equivalent to saying that $ED(U)$ equals the number of vertices in $V - U$ which are external private neighbors with respect to the set $U$.  They went on to define the \emph{efficient domination number} to equal the minimum cardinality of a dominating set $U$ for which $ED(U) = V - U$.  Thus, the  efficient domination number in \cite{Bernhard93} is the same thing as the perfect domination number in \cite{Cockayne93}. We should point out that the \cite{Bernhard93} definition of an efficient dominating set is not the same as the definition we have given of an efficient dominating set, the difference being that in the \cite{Bernhard93} definition no conditions are placed on the vertices in $U$.

The following inequality is clear from the definitions.

\begin{prop}
For any graph $G$ of order $n$, $n - \gamma_p(G) \le EPN(G)$.
\end{prop}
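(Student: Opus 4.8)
The plan is to exhibit a single set $U$ whose number of external private neighbors is exactly $n-\gamma_p(G)$, and then invoke the fact that $EPN(G)$ is the maximum of $E(U)$ over all $U\subseteq V$. The natural candidate is a minimum perfect dominating set.

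First I would fix a perfect dominating set $S$ of $G$ with $|S|=\gamma_p(G)$ (such a set exists since, as remarked after the definition of $\gamma_p$, $V(G)$ itself is always a perfect dominating set, so the parameter is well defined). Next I would recall the defining property: for every vertex $u\in V-S$ we have $|N(u)\cap S|=1$. By the definition of external private neighbor given in Section~\ref{sec:privneighbors}, this says precisely that every vertex of $V-S$ is an external private neighbor with respect to $S$. Hence $E(S)=|V-S|=n-\gamma_p(G)$.

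Finally I would close the argument by noting that $EPN(G)$ is, by definition, the maximum of $E(U)$ taken over all sets $U\subseteq V$, so in particular $EPN(G)\ge E(S)=n-\gamma_p(G)$, which is the claimed inequality.

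There is essentially no obstacle here: the statement is an immediate consequence of unwinding the two definitions, and the only point that even warrants a sentence is confirming that a minimum perfect dominating set exists so that $\gamma_p(G)$ and the chosen $S$ make sense. One could also remark that equality need not hold in general (for instance because an $EPN(G)$-set may be far from dominating), but that is not needed for the proof.
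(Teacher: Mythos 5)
Your proof is correct and matches the paper's intent exactly: the paper gives no explicit argument (it states the inequality is "clear from the definitions"), and the reasoning it has in mind is precisely yours — every vertex outside a minimum perfect dominating set $S$ has exactly one neighbor in $S$ and is therefore an external private neighbor with respect to $S$, so $EPN(G)\ge E(S)=n-\gamma_p(G)$. No gaps.
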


As defined by Dunbar et al. \cite{Dunbar95},  a set $U \subset V$ is called a {\it total perfect dominating set} if for every vertex $v \in V(G)$, $|N(v) \cap U| = 1$.  Notice that in a total perfect dominating set $U$, the subgraph $G[U]$ induced by the vertices in $U$ is, again, a disjoint union of $K_2$ subgraphs.  Define the {\it total perfect domination number} $\gamma_{tp}(G)$ and the {\it upper total perfect domination number} $\Gamma_{tp}(G)$ to equal the minimum and maximum cardinality of a total perfect dominating set of $G$, if such a set exists.  Notice that if a total perfect dominating set $U$ exists, then every vertex $v \in V$ is either an external private neighbor with respect to $U$ or an internal private neighbor with respect to $U$ and the following must be true.

\begin{prop}
If a graph $G$ of order $n$ has a total perfect dominating set $U$, then $E(U)+I(U) = EIPN(G) = n$.
\end{prop}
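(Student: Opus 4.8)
The plan is to unwind the definition of a total perfect dominating set and observe that it forces every vertex of $G$ to be a private neighbor of exactly one of the two relevant types. Suppose $U$ is a total perfect dominating set, so that $|N(v)\cap U|=1$ for every $v\in V(G)$. First I would handle the vertices outside $U$: if $v\in V-U$, then the condition $|N(v)\cap U|=1$ says precisely that $v$ is an external private neighbor with respect to $U$. Consequently every vertex of $V-U$ is counted by $E(U)$, and $E(U)=|V-U|=n-|U|$.

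Next I would handle the vertices inside $U$. If $w\in U$, then $N(w)\cap U$ is a singleton, say $\{x\}$, and since $N(w)\cap U\subseteq U$ we have $x\in U$; by the definition in Section~\ref{sec:privneighbors} this means $w$ is an internal private neighbor of $x$ with respect to $U$ (equivalently, $w$ has degree $1$ in $G[U]$, which is consistent with the already-noted fact that $G[U]$ is a disjoint union of $K_2$'s). Hence every vertex of $U$ is counted by $I(U)$, so $I(U)=|U|$. Adding the two counts gives $E(U)+I(U)=(n-|U|)+|U|=n$.

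Finally, for the reverse inequality I would observe that for an arbitrary vertex set $U'$ the external private neighbors with respect to $U'$ all lie in $V-U'$ while the internal private neighbors all lie in $U'$; since $U'$ and $V-U'$ partition $V(G)$, these two sets of private neighbors are disjoint and therefore $E(U')+I(U')\le n$. Taking the maximum over all $U'$ gives $EIPN(G)\le n$, and combining this with $E(U)+I(U)=n$ from the total perfect dominating set yields $EIPN(G)=n$; in particular $U$ itself is an $EIPN(G)$-set.

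I do not expect any genuine obstacle here: the argument is entirely a matter of reading off the definitions. The only point requiring a moment's care is verifying that a vertex $w\in U$ having a unique neighbor in $U$ is indeed counted as an internal private neighbor (namely of that neighbor), which is immediate, and making explicit the trivial bound $E(U')+I(U')\le n$ that gives $EIPN(G)\le n$.
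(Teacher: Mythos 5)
Your proof is correct and follows the same route the paper takes: the paper simply observes that under a total perfect dominating set every vertex of $V-U$ is an external private neighbor and every vertex of $U$ is an internal private neighbor, hence $E(U)+I(U)=n$, with the trivial bound $E(U')+I(U')\le |V-U'|+|U'|=n$ giving equality with $EIPN(G)$. Your write-up just makes explicit the definitional unwinding that the paper leaves implicit.
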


\section{Upper private domination}

In 1979 Bollob\'{a}s and Cockayne \cite{Bollobas79} showed that every graph $G$ without isolated vertices has at least one minimum dominating set $U$ such that every vertex in $U$ has at least one external private neighbor with respect to $U$.  Based on this result, in 1990 Hedetniemi, Hedetniemi and Jacobs \cite{Hedetniemi90} defined a dominating set $U$ to be a {\it private dominating set} if every vertex $v \in U$ has at least one external private neighbor.   From our previous definitions, such a set could also be called an {\it open irredundant dominating set}. The minimum cardinality of a private dominating set in a graph $G$ is the {\it private domination number}, denoted by $\gamma_{pvt}(G)$, while the {\it upper private domination number} $\Gamma_{pvt}(G)$ equals the maximum cardinality of a minimal private dominating set in a graph.  The next results follow from \cite{Bollobas79} and the definitions.

\begin{prop}
For any graph without isolated vertices, 
$$\gamma(G) = \gamma_{pvt}(G) \le \Gamma_{pvt}(G) \leq EPN(G).$$
\end{prop}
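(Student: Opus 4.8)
The plan is to verify the chain from left to right, drawing the first equality from the Bollob\'{a}s--Cockayne theorem quoted just above the statement and the final inequality from elementary counting (or, equivalently, from Proposition~\ref{prop:inequalities}).

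First I would dispatch $\gamma(G)=\gamma_{pvt}(G)$. Since every private dominating set is in particular a dominating set, $\gamma(G)\le\gamma_{pvt}(G)$ is immediate. For the reverse inequality I would invoke the result of Bollob\'{a}s and Cockayne~\cite{Bollobas79}: a graph $G$ without isolated vertices has a minimum dominating set $U$ in which every vertex has an external private neighbor with respect to $U$. Such a $U$ is by definition a private dominating set with $|U|=\gamma(G)$, so $\gamma_{pvt}(G)\le\gamma(G)$, and equality follows. Next, for $\gamma_{pvt}(G)\le\Gamma_{pvt}(G)$, I would note that a minimum private dominating set $U$ is necessarily a \emph{minimal} private dominating set: any private dominating set properly contained in $U$ would have strictly smaller cardinality, contradicting the minimality of $|U|$. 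Hence $\gamma_{pvt}(G)$ is the cardinality of some minimal private dominating set and is therefore at most the maximum such cardinality, $\Gamma_{pvt}(G)$.

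For the last inequality $\Gamma_{pvt}(G)\le EPN(G)$, let $U$ be a minimal private dominating set with $|U|=\Gamma_{pvt}(G)$. By definition of a private dominating set, every vertex of $U$ has an external private neighbor with respect to $U$, so $U$ is an open irredundant set; thus $|U|\le OIR(G)\le EPN(G)$ by Proposition~\ref{prop:inequalities}(3). Alternatively one can argue directly: choosing for each $v\in U$ one external private neighbor yields an injection into $V-U$, since an external private neighbor $w$ of $v$ satisfies $N(w)\cap U=\{v\}$ and hence determines $v$ uniquely; this gives $E(U)\ge|U|$, so $EPN(G)\ge E(U)\ge|U|=\Gamma_{pvt}(G)$.

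There is no real obstacle here; the statement is essentially a bookkeeping exercise built on the cited Bollob\'{a}s--Cockayne theorem. The only points that need a line of care are (i) confirming that a minimum private dominating set really is a minimal private dominating set, so that the middle inequality is legitimate, and (ii) observing the distinctness of external private neighbors of distinct vertices of $U$ in the final step.
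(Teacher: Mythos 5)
Your proof is correct and follows exactly the route the paper intends: the paper gives no written proof, stating only that the result ``follows from \cite{Bollobas79} and the definitions,'' and your write-up is precisely that argument spelled out (Bollob\'{a}s--Cockayne for the equality, minimality of a minimum set for the middle inequality, and the injection of $U$ into its external private neighbors for the last). No gaps.
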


\section{Open problems}  \label{sec:open}
We note that $ESPN(G)\ge IR(G)=\alpha(G)\ge \frac{1}{2}n(G)$ for any bipartite graph $G$.  In particular, $ESPN(T) \ge \frac{1}{2}n(T)$
for every tree.  We suspect the lower bound in terms of the order of a tree is larger.  The following class of trees shows that a 
sharp lower bound could be $\frac{4}{5}n(T)$. 
For each positive integer $k \ge 2$, let $T_k$ be the tree constructed from $P_k$ by attaching two leaves adjacent to each
vertex of $P_k$. In addition, to each vertex of the original path identify a leaf from a path of order $3$. It can be shown that 
$ESPN(T_k)=\frac{4}{5}n(T_k)$.  For example, the black vertices in Figure~\ref{fig:ESPN} illustrate an $ESPN(T_4)$-set and 
$ESPN(T_4)=16$.

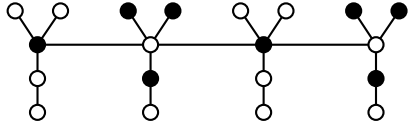
\begin{figure}[htb]
\tikzstyle{every node}=[circle, draw, fill=black!0, inner sep=0pt,minimum width=.2cm]
\begin{center}
\begin{tikzpicture}[thick,scale=.9]
  \draw(0,0) { 
    +(0.00,1.50) -- +(0.33,1.00)
    +(0.67,1.50) -- +(0.33,1.00)
    +(0.33,1.00) -- +(0.33,0.00)
    +(1.67,1.50) -- +(2.00,1.00)
    +(2.00,1.00) -- +(2.33,1.50)
    +(2.00,1.00) -- +(2.00,0.00)
    +(3.33,1.50) -- +(3.67,1.00)
    +(3.67,1.00) -- +(4.00,1.50)
    +(3.67,1.00) -- +(3.67,0.00)
    +(5.00,1.50) -- +(5.33,1.00)
    +(5.33,1.00) -- +(5.67,1.50)
    +(5.33,1.00) -- +(5.33,0.50)
    +(5.33,0.50) -- +(5.33,0.00)
    +(0.33,1.00) -- +(2.00,1.00)
    +(2.00,1.00) -- +(3.67,1.00)
    +(3.67,1.00) -- +(5.33,1.00)
    +(0.00,1.50) node{}
    +(0.67,1.50) node{}
    +(1.67,1.50) node[fill=black]{}
    +(2.33,1.50) node[fill=black]{}
    +(3.33,1.50) node{}
    +(4.00,1.50) node{}
    +(5.00,1.50) node[fill=black]{}
    +(5.67,1.50) node[fill=black]{}
    +(0.33,1.00) node[fill=black]{}
    +(0.33,0.50) node{}
    +(0.33,0.00) node{}
    +(2.00,1.00) node{}
    +(2.00,0.50) node[fill=black]{}
    +(2.00,0.00) node{}
    +(3.67,1.00) node[fill=black]{}
    +(3.67,0.50) node{}
    +(3.67,0.00) node{}
    +(5.33,1.00) node{}
    +(5.33,0.50) node[fill=black]{}
    +(5.33,0.00) node{}
    
  };
\end{tikzpicture}
\end{center}
\vskip -0.6 cm \caption{The tree $T_4$.} \label{fig:ESPN}
\end{figure}

\begin{prob}
Find the largest positive constant $c$ such that $ESPN(T) \ge c\cdot n(T)$ for every tree $T$. 
\end{prob}

Based on the data presented in Section~\ref{sec:classes} we pose the following conjectures.
\begin{conj} \label{conj:EIPNgrids}
If $m \ge 2$, then $EIPN(P_2 \cp P_m)=2m$.
\end{conj}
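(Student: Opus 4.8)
\textbf{Proof proposal for Conjecture~\ref{conj:EIPNgrids}.}
The plan is to prove both an upper bound $EIPN(P_2\cp P_m)\le 2m$ and a matching lower bound. The upper bound is immediate from the trivial fact that $EIPN(G)\le n(G)$ for every graph $G$, since $n(P_2\cp P_m)=2m$: a vertex can be counted at most once among the external and internal private neighbors. So the whole content is the lower bound $EIPN(P_2\cp P_m)\ge 2m$, i.e., the ladder $G_{m,2}$ is $EI$-efficient in the terminology of Section~6. Equivalently, I must exhibit a total efficient dominating set $D$ of $G_{m,2}$, that is, a set $D$ with $\{N[d]:d\in D\}$ partitioning nothing, but rather $D$ inducing a disjoint union of $K_2$'s, every vertex of $V-D$ having exactly one neighbor in $D$, and every vertex of $D$ having exactly one neighbor in $D$ (an internal private neighbor). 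In fact this is already asserted in the paper: Klostermeyer and Goldwasser~\cite{kg-2006} showed $G_{n,2}$ is $EIS$-efficient for every $n$, and the text notes they characterized the $EI$-efficient grids; but to keep the argument self-contained I would give an explicit construction.

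The key step is to find a periodic pattern of "dominoes" (copies of $K_2$ sitting inside $D$) along the ladder whose closed neighborhoods tile all of $V(G_{m,2})$ with each outside vertex covered exactly once. Label the vertices $(i,1),(i,2)$ for $i\in[m]$. A natural period-$4$ block works: on columns $4j+1,4j+2,4j+3,4j+4$, place into $D$ the vertical edge $\{(4j+1,1),(4j+1,2)\}$ and the vertical edge $\{(4j+4,1),(4j+4,2)\}$ is wrong because those two would then be too far apart to dominate columns $4j+2,4j+3$ correctly and too close at the block boundary. Instead I would try horizontal dominoes: in each period-$4$ stretch put the horizontal $K_2$ $\{(4j+1,1),(4j+2,1)\}$ and the horizontal $K_2$ $\{(4j+3,2),(4j+4,2)\}$; one checks that $N[\{(4j+1,1),(4j+2,1)\}]$ covers $(4j,1)?$—no, it covers $(4j+1,1),(4j+1,2),(4j+2,1),(4j+2,2)$ and the horizontal neighbor $(4j+3,1)$, while the other domino covers $(4j+3,1),(4j+3,2),(4j+4,1),(4j+4,2)$ and $(4j+2,2)?$—these overlap, so I need to shift the vertical positions. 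The correct periodic pattern (this is exactly the Klostermeyer--Goldwasser total-efficient pattern for the ladder) is: domino $\{(4j+1,1),(4j+1,2)\}$ together with domino $\{(4j+3,1),(4j+4,1)\}$ or a similar staggered arrangement; I would determine the right residues by a small hand computation so that every vertex outside $D$ gets exactly one $D$-neighbor and every vertex of $D$ gets exactly one $D$-neighbor, then verify it covers one full period of length $4$. This handles $m\equiv 0\pmod 4$; for the residues $1,2,3$ I would adjust the two end blocks — e.g., cap the ladder with a single vertical domino or a short horizontal one — and check the finitely many boundary cases directly.

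The main obstacle is purely bookkeeping: getting the boundary blocks right for all four residue classes of $m\pmod 4$ so that no vertex is over-dominated or under-dominated at the ends, and confirming that each $D$-vertex retains exactly one $D$-neighbor (so it contributes an internal private neighbor and not a self private neighbor). Since $EI$-efficiency forces $E(D)+I(D)=n=2m$ with $D$ a union of $K_2$'s and $|D|=2m/3$ is generally not an integer, in fact for the ladder the right count is $|D|$ vertices forming $|D|/2$ dominoes with $2m-|D|$ external private neighbors, and a quick parity/counting sanity check ($|D|\equiv 0\pmod 2$, and $|N[D]|$ counted with multiplicity equals $4\cdot(\text{\# dominoes})$ must equal $n + (\text{\# internal edges counted}) = 2m + |D|/1$? ) would let me pin down $|D|$ before hunting for the pattern. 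Once the explicit $D$ is written down, the verification is a routine check over one period plus a handful of end configurations, so I would present the pattern via a small figure (a labelled drawing analogous to Figure~\ref{fig:PN}) and a short case analysis on $m\bmod 4$. Combined with the trivial upper bound $EIPN(G)\le n(G)$, this yields $EIPN(P_2\cp P_m)=2m$ for all $m\ge 2$.
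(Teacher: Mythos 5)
First, a point of order: the paper does not prove this statement. It appears as Conjecture~\ref{conj:EIPNgrids} in the open problems section, so there is no proof of record to compare yours against; your attempt has to stand on its own. Your framing is correct as far as it goes: $EIPN(G)\le n(G)$ is trivial, and $EIPN(P_2\cp P_m)=2m$ is equivalent to the ladder $G_{m,2}$ admitting a total perfect dominating set $D$ (every vertex of the graph having exactly one neighbor in $D$), i.e.\ to $G_{m,2}$ being $EI$-efficient. That reduction is sound.

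The gap is that you never actually exhibit $D$. Both patterns you write down are ones you yourself observe to fail, and the argument ends with ``I would determine the right residues by a small hand computation'' --- but that computation is the entire content of the proof. Worse, you are searching with the wrong ansatz: the correct period is $3$, not $4$, and the dominoes should be vertical rungs, not horizontal edges. (Period $4$ with horizontal dominoes is right for paths, where $P_n$ is $EI$-efficient iff $n\not\equiv 1\pmod 4$, but not for the ladder.) A counting argument pins this down and would have rescued your ``sanity check'' paragraph: if $D$ is a total perfect code, then $\sum_{d\in D}\deg(d)=2m$; since all vertices of $G_{m,2}$ have degree $3$ except the four corners of degree $2$, writing $|D|=2k$ and letting $c$ be the number of corners in $D$ gives $6k-c=2m$, hence $k=\lceil m/3\rceil$ and $c=6\lceil m/3\rceil-2m\in\{0,2,4\}$. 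The working pattern is then: take the vertical rungs $\{(i,1),(i,2)\}$ for all $i\equiv 1\pmod 3$ with $i\le m$ when $m\equiv 1\pmod 3$, with $i\le m-1$ when $m\equiv 2\pmod 3$, and for all $i\equiv 2\pmod 3$ with $i\le m-1$ when $m\equiv 0\pmod 3$. Each rung in column $i$ gives every vertex in columns $i-1$, $i$, $i+1$ exactly one neighbor in $D$, consecutive rungs are three columns apart so nothing is covered twice, and the end columns are handled by the first and last rungs; a one-line check of the three residue classes of $m\bmod 3$ finishes the verification. With that substitution your outline becomes a complete and correct proof of the conjecture (every vertex of $D$ is an internal private neighbor and every other vertex an external one, so $E(D)+I(D)=2m$).
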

The truth of Conjecture~\ref{conj:EIPNgrids} would imply that $EISPN(P_2 \cp P_m)=2m$.  In addition, we pose the following:
\begin{conj}
If $k \ge 1$, then $EISPN(P_3 \cp P_{2k})=6k$. If $m \ge 2$, then $EISPN(P_4 \cp P_m)=4m$.
\end{conj}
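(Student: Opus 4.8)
The plan is to reduce both assertions to a single fact about efficient domination and then supply the required dominating sets by periodic constructions. For \emph{every} graph $G$ and every $U\subseteq V(G)$, the sets of self, internal and external private neighbors with respect to $U$ are pairwise disjoint subsets of $V(G)$, so $S(U)+I(U)+E(U)\le n(G)$; hence $EISPN(G)\le n(G)$ for all $G$, and in particular $EISPN(P_3\cp P_{2k})\le 6k$ and $EISPN(P_4\cp P_m)\le 4m$. By the discussion of $EIS$-efficient graphs above, equality holds exactly when the grid in question is $EIS$-efficient, i.e.\ admits a $1,2$-efficient dominating set: a set $D$ with $\Delta(G[D])\le 1$ and $|N(v)\cap D|=1$ for every $v\in V(G)-D$ (equivalently, a partition of $V(G)$ into closed neighborhoods of single vertices and of edges). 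So it remains to exhibit such a $D$ in $P_3\cp P_{2k}$ and in $P_4\cp P_m$.

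For $P_3\cp P_{2k}$ I would use a period-four ``staircase'' of horizontal dominoes in the two outer rows of the $3\times\infty$ strip: put the edges $\{(1,c),(1,c+1)\}$ into $D$ for $c\equiv 1\pmod 4$ and the edges $\{(3,c),(3,c+1)\}$ into $D$ for $c\equiv 3\pmod 4$. A direct local check shows that on the bi-infinite strip $G[D]$ has maximum degree at most $1$ and every non-$D$ vertex has exactly one neighbor in $D$ (each middle-row vertex being dominated, alternately, by the outer-row domino directly above or below it). Restricting to $2k$ consecutive columns, one only has to worry about the first and last column; a short case analysis shows that these columns cause no trouble provided each of them lies in one of two fixed residue classes modulo $4$, and --- crucially --- since the number of columns is even the starting offset can be chosen so that both endpoints fall in safe classes simultaneously. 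Thus the restricted $D$ is a $1,2$-efficient dominating set of $P_3\cp P_{2k}$, which settles this case. (The parity hypothesis is genuinely used, which is consistent with $G_{3,3}$ and $G_{5,3}$ failing to be $EIS$-efficient.)

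For $P_4\cp P_m$ I would run the same programme in four rows with the pattern: isolated core vertices $(1,c)$ and $(4,c)$ for $c\equiv 1\pmod 4$, and a vertical domino $\{(2,c),(3,c)\}$ for $c\equiv 3\pmod 4$. Again $G[D]$ on the bi-infinite strip consists only of isolated vertices and independent edges and dominates the complement exactly once. Aligning the first column to one of the two ``safe'' residues makes the left end, and (when $m$ is odd) also the right end, need no adjustment; when $m$ is even one end cannot be made safe by the periodic pattern alone, and there one replaces the final block by a small self-contained cap on the last one or two columns --- for instance a pair of vertical dominoes, or a vertical-domino-plus-singleton configuration --- chosen so that at the cap/bulk seam $G[D]$ still has maximum degree at most $1$ and no bulk vertex is dominated twice. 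Verifying the finitely many cap cases is routine. I note, finally, that both families are in any case subsumed by the classification of $EIS$-efficient grids in \cite{Jason} (with the two-column subcases due to \cite{kg-2006}), since $P_3\cp P_{2k}\cong G_{2k,3}$ with $2k\notin\{3,5\}$ and $P_4\cp P_m\cong G_{m,4}$ is never exceptional; the explicit argument above is essentially a second, self-contained proof.

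The step I expect to be the main obstacle is precisely the construction and verification of the boundary cap in the four-row case for even $m$. The bi-infinite patterns and their checks are easy, but truncating to a finite grid strands a few corner vertices whose only potential dominators have already been ``used up'' by the bulk; patching them forces a local redesign of the last block, and the delicate point is to do this with one cap configuration valid for all even $m$ (rather than one per residue modulo $8$) while keeping $\Delta(G[D])\le 1$ across the interface. Once that local gadget is pinned down, the remainder is bookkeeping.
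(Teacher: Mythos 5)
First, note that the paper offers no proof of this statement at all---it is posed as a conjecture based on the computed tables---so there is no ``paper proof'' to compare against; I am judging your argument on its own terms. Your reduction is correct: the three kinds of private neighbors with respect to a fixed $U$ are pairwise disjoint, so $EISPN(G)\le n(G)$, with equality exactly when $G$ admits a $1,2$-efficient dominating set. Your three-row construction is also correct: for the staircase of outer-row horizontal dominoes one checks that the left truncation is safe precisely when the first column lies in the two even residues mod $4$ and the right truncation is safe precisely in the two odd residues, so an even number of columns is exactly what lets both ends be safe, and this covers all $P_3\cp P_{2k}$. The four-row periodic pattern likewise works verbatim for odd $m$ (both end columns must carry part of $D$, which forces them into odd positions).

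The genuine gap is exactly where you predicted it, and your proposed repair does not work as described. In your four-row pattern all $D$-columns sit at odd positions, so for even $m$ the last column can never be a $D$-column of the unmodified bulk, and the cap must break the pattern. But the gadgets you name fail: if the cap places the vertical domino $\{(2,m-1),(3,m-1)\}$ (or $\{(2,m),(3,m)\}$) next to the periodic bulk, then the corners $(1,m)$ and $(4,m)$ have no admissible dominator---putting them into $D$ double-dominates $(2,m)$ and $(3,m)$, and leaving them out leaves them undominated---and a short case analysis shows every one- or two-column cap built from vertical dominoes or a vertical-domino-plus-singleton dead-ends this way. A correct cap does exist, but it is a different gadget: choose the phase of the bulk so that column $m-3$ carries the rows-$2$--$3$ vertical domino and column $m-2$ is empty, then place the two \emph{horizontal} dominoes $\{(1,m-1),(1,m)\}$ and $\{(4,m-1),(4,m)\}$; one checks directly that this yields a $1,2$-efficient dominating set of $P_4\cp P_m$ for all even $m\ge 2$ (e.g.\ $\{(2,1),(3,1),(1,3),(1,4),(4,3),(4,4)\}$ for $m=4$). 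So your proof is completable, but as written the crucial verification is asserted rather than done, and the asserted configurations are the wrong ones. Your fallback observation is sound, however: both claims already follow from the results of~\cite{Jason} and~\cite{kg-2006} that the paper itself quotes, since $P_3\cp P_{2k}\cong G_{2k,3}$ with $2k\notin\{3,5\}$ and $G_{m,4}$ is never exceptional; that citation route is complete even though your self-contained route is not.
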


\begin{prob}
Construct algorithms to compute each of  $IPN(T)$, $EPN(T)$, $ESPN(T)$, $EIPN(T)$, $ISPN(T)$, and $EISPN(T)$ for a tree $T$.
\end{prob}

\begin{prob}
Determine the complexity  of computing each of  $IPN(G)$, $EPN(G)$, $ESPN(G)$, $EIPN(G)$, $ISPN(G)$, and $EISPN(G)$ for a graph $G$.
\end{prob}

\end{document}